\newcommand\sbullet[1][.6]{\mathbin{\vcenter{\hbox{\scalebox{#1}{$\bullet$}}}}}
\theoremstyle{plain}
\newtheorem{theorem}{Theorem}[section]
\newtheorem{lemma}[theorem]{Lemma}
\newtheorem{proposition}[theorem]{Proposition}
\newtheorem{existence theorem}[theorem]{Existence Theorem}
\newtheorem{uniqueness theorem}[theorem]{Uniqueness Theorem}
\newtheorem{corollary}[theorem]{Corollary}
\newtheorem*{corollary3.10prime}{Corollary 3.10$^\prime$}
\theoremstyle{remark}
\newtheorem{remark}{Remark}
\title{A Pythagorean Theorem for volume}
\author{Fredric D. Ancel}
\address{Department of Mathematical Sciences, University of Wisconsin-Milwaukee}
\email{ancel@uwm.edu}
\date{May 13, 2023}
\begin{document}
\maketitle

\begin{abstract}
    Lebesgue measurable subsets $A$ and $B$ of parallel or identical $k$-dimensional affine subspaces of Euclidean $n$-space $\mathbb{E}^n$ satisfy
    
    \medskip\noindent\textbf{\emph{The Product Formula for Volume:}}
    \begin{center}
    $Vol_{k}(A)Vol_{k}(B)=\sum_{J\in\mathcal{S}(n,k)}Vol_{k}(\pi_{J}(A))Vol_{k}(\pi_{J}(B))$.
    \end{center}
    
    \medskip\noindent Here $Vol_{k}$ denotes $k$-dimensional Lebesgue measure; $\mathcal{S}(n,k)$ denotes the set of all $k$-element subsets of $\{1,2,\cdots,n\}$; 
    and for $J\in\mathcal{S}(n,k)$, $\mathbb{E}^J=$\\
    $\{(x_1,x_2,\cdots,x_n)\in\mathbb{E}^n:x_i=0 \text{ for all } i\notin J\}$ and $\pi_{J}:\mathbb{E}^n\rightarrow\mathbb{E}^J$ is the projection that sends the $i^{th}$ coordinate of a point of $\mathbb{E}^n$ to $0$ whenever $i\notin J$.  Setting $B=A$, we obtain the corollary:

    \medskip\noindent\textbf{\emph{The Pythagorean Theorem for Volume:}}
    \begin{center}
    $Vol_{k}(A)^2=\sum_{J\in\mathcal{S}(n,k)}(Vol_{k}(\pi_{J}(A)))^2$.
    \end{center}
    
\end{abstract}

\section{Introduction}
If a vector space $V$ is equipped with an inner product $\langle\;,\;\rangle$, then this inner product satisfies the following equation.

\medskip\noindent\textbf{\emph{The Product Formula for Inner Products:}}
\begin{center}
$\langle x,y\rangle=\sum_{1\leq i\leq n}\langle x,u_i\rangle\langle y,u_i\rangle$
\end{center}

\medskip\noindent whenever $u_1,u_2,\cdots,u_n$ is an orthonormal basis for $V$. The fact that the dot product on Euclidean $n$-space $\mathbb{E}^n$ satisfies this equation with respect to the standard orthonormal basis for $\mathbb{E}^n$ allows us to deduce

\medskip\noindent\textbf{\emph{The Product Formula for Length:}}
\begin{center}
$Length(A)Length(B)=\sum_{1\leq j\leq n}Length(\pi_j(A))Length(\pi_j(B))$
\end{center}

\medskip\noindent provided that $A$ and $B$ are parallel or identical line segments in $\mathbb{E}^n$.  Here $\mathbb{E}^{\{j\}}=\{(x_1,x_2,\cdots,x_n)\in\mathbb{E}^n:x_i=0 \text{ for all }i\neq j\}$ and $\pi_j:\mathbb{E}^n\rightarrow\mathbb{E}^{\{j\}}$ is the projection that sends the $i^{th}$ coordinate of a point of $\mathbb{E}^n$ to $0$ whenever $i\neq j$. Setting $B=A$ then yields

\medskip\noindent\textbf{\emph{The Pythagorean Theorem for Length:}}
\begin{center}
$Length(A)^2=\sum_{1\leq j\leq n}(Length(\pi_j(A)))^2$
\end{center}

\medskip The object of this article is to exhibit an inner product on a vector space that allows the measurement of - not the \textit{length} of a line segment in $\mathbb{E}^n$ - but the volume of a subset of a k-dimensional affine subspace of $\mathbb{E}^n$.  Then, just as the dot product on $\mathbb{E}^n$ leads to the Product Formula and the Pythagorean Theorem for Length, this inner product yields the Product Formula and Pythagorean Theorem for $k$-dimensional Volume stated in the Abstract.  The emphasis in this article is conceptual rather than computational: the Product Formula and Pythagorean Theorem for Volume follow logically once Euclidean space with its dot product have been generalized to the appropriate inner product space.  The inner product space which is the appropriate generalization of $\mathbb{E}^n$ is a well-known algebraic object that is typically denoted $\Lambda_k(\mathbb{E}^n)$.  It is a vector subspace of the \textit{exterior algebra} of $\mathbb{E}^n$ which is usually denoted $\Lambda(\mathbb{E}^n)$.  In this article, we do not assume the readers are familiar with $\Lambda_k(\mathbb{E}^n)$ or its inner product.  We will provide a complete description of these objects.

The Product Formula and Pythagorean Theorem for Volume were new to the author when he first observed them.  Because of the remarkable way that the Pythagorean Theorem for Volume echoes the universally renowned formula for length, the author was surprised that the results for volume are not better known.  
However, the author’s subsequent search of the literature revealed that the Pythag-orean Theorem for Volume has been discovered, rediscovered, proved and reproved repeatedly.  
(The Product Formula for Volume, on the other hand, is not explicitly stated in any source found by the author; this article may be its first formulation.)
Apparently, the earliest statement of the Pythagorean Theorem for the area of a triangle in $\mathbb{E}^3$ is found in the 1220 book \emph{Practica geometriae} by L. Fibonacci (1172-1250) (\cite{boyer} page 232).  
A typical application of this result would tell us that for a triangle $T$ in $\mathbb{E}^3$ with vertices $P=(p,0,0)$, $Q=(0,q,0)$ and $R=(0,0,r)$ where $p,\;q\text{ and }r > 0$, $(Area(T))^2=(\frac{1}{2}qr)^2+(\frac{1}{2}pr)^2+(\frac{1}{2}pq)^2$, because the projections of $T$ into the $YZ$-plane, the $XZ$-plane and the $XY$-plane are triangles that have areas $\frac{1}{2}qr$, $\frac{1}{2}pr$ and $\frac{1}{2}pq$, respectively.\footnote{To corroborate this formula for $Area(T)$, note that $Area(T)=\frac{1}{2}Area(Z)$ where $Z$ is the parallelogram with vertices $P$, $Q$, $R$ and $Q+R-P$ and $Area(Z)$ equals the norm of the cross product $(Q-P)\times(R-P)$.}  
According to page 37 of \cite{eves} and various Wikipedia articles, following its appearance in Fibonacci’s book, this result (for a triangle in $\mathbb{E}^3$) was familiar to R. Descartes (1596-1650) and J. Faulhaber (1580-1635).  
Two independent proofs of this result were published in the late $18^{th}$ century, one in 1772 by C. de Tinseau (1748-1822) and the other in 1783 by J. P. de Gua de Malves (1712-1785).  
Ironically, the Pythagorean Theorem for the area of a triangle in $\mathbb{E}^3$ then acquired the name \emph{de Gua’s Theorem}.  
A statement and careful proof of the Pythagorean Theorem for the area of a planar polygonal figure in $\mathbb{E}^3$ appeared in the influential 1813 memoir ~\cite{hachette} by J. Hachette (1769-1834) and G. Monge (1746-1818) (See \emph{Theoreme 66} on page 100 of \cite{hachette}, also cited on page 436 of \cite{boyer}).
According to \cite{knill}, in about 1812, J. Binet (1786-1856) and A-L. Cauchy (1789-1857) independently discovered a formula (named both \emph{Binet-Cauchy} and \emph{Cauchy-Binet}) for the determinant of the product of two non-square matrices whose product is a square matrix. 
(Binet was the inventor of matrix multiplication.)
This formula, if interpreted correctly, is equivalent to the Product Formula for the inner product on $\Lambda_k(\mathbb{E}^n)$.  
Of the nineteen articles cited in the references, eleven of them - \cite{alvarez}, \cite{atzema}, \cite{cho}, \cite{cook}, \cite{czyzewska}, \cite{donchian}, \cite{drucker}, \cite{fitting}, \cite{jacoby}, \cite{lin} and \cite{porter} - are recapitulations of the Pythagorean Theorem for Volume, in some instances specialized to parallelograms in $\mathbb{E}^3$, and in other instances covering the more general case of $k$-dimensional parallelopipeds in $\mathbb{E}^n$. 
Of these, references \cite{alvarez} and \cite{jacoby} contain informative historical remarks.  
Paper \cite{czyzewska} is the only one found by the author that makes the observation that the Pythagorean Theorem for Volume applies to all Lebesgue measurable subsets of an affine subspace of $\mathbb{E}^n$. 
The author was unaware of the existence of \cite{czyzewska} when he discovered the results stated in the Abstract; and the methods of this article are more linear algebraic and less analytic in character than the methods of \cite{czyzewska}.  
Since the author’s search for papers on the Pythagorean Theorem for Volume was more haphazard than exhaustive, there are no doubt many such articles that are unintentionally omitted from the references.

\section{Linear algebraic preliminaries}
In this article, we restrict ourselves to vector spaces and matrices over the field $\mathbb{R}$ of real numbers.

Let $V$ be a vector space.
For $p\in V$, define the \emph{translation} $T_p:V\rightarrow V$ by $T_p(x)=x+p$.
Note that for $p$, $q\in V$, $T_q\circ T_p=T_{p+q}$ and $T_p^{-1}=T_{-p}$.
A subset $W$ of $V$ is a \emph{$k$-dimensional affine subspace} if there is a $k$-dimensional vector subspace $W_0$ of $V$ and a $p\in V$ such that $W=T_p(W_0)$.
Two $k$-dimensional affine subspaces $W$ and $W'$ of $V$ are \emph{identical or parallel} if there is a $q\in V$ such that $W'=T_q(W)$.  
Observe that if $W_0$ is a vector subspace of $V$ and $p$, $q\in V$, then $T_p(W_0)=T_q(W_0)$ if and only if $p-q\in W_0$.

\begin{proposition}
If $L:V\rightarrow V'$ is a linear function between vector space, then for every $p\in V$, $L\circ T_p=T'_{L(p)}\circ L$.  (Here, for $p\in V$ and $p'\in V'$, $T_p:V\rightarrow V$ and $T'_{p'}:V'\rightarrow V'$ denote translations.)
\end{proposition}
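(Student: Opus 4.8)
The plan is to verify the asserted equality of functions $L\circ T_p$ and $T'_{L(p)}\circ L$ by evaluating both sides at an arbitrary point $x\in V$ and checking that the outputs agree. Since two functions with the same domain and codomain are equal precisely when they take the same value at every point, this pointwise check suffices.

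First I would unwind the left-hand side using the definition of the translation $T_p$, writing $(L\circ T_p)(x)=L(T_p(x))=L(x+p)$. Then I would invoke the additivity of the linear map $L$ to rewrite $L(x+p)=L(x)+L(p)$. Next I would unwind the right-hand side using the definition of the translation $T'_{L(p)}$ on $V'$, obtaining $(T'_{L(p)}\circ L)(x)=T'_{L(p)}(L(x))=L(x)+L(p)$. Comparing the two expressions shows they are identical, completing the argument.

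The only ingredient beyond the definitions is the additivity half of linearity; homogeneity of $L$ is not needed. Consequently there is no real obstacle here — the statement is a direct unpacking of definitions, and I would present it as a short self-contained computation, noting in passing that it is the affine analogue of the elementary fact that linear maps commute appropriately with the translation actions on domain and codomain.
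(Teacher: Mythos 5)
Your proof is correct and is essentially identical to the paper's: both evaluate $L\circ T_p$ and $T'_{L(p)}\circ L$ at an arbitrary $x\in V$ and use the additivity of $L$ to see that both sides equal $L(x)+L(p)$. Nothing further is needed.
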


\begin{proof}
For every $x\in V$, $L\circ T_p(x)=L(x+p)=L(x)+L(p)=T'_{L(p)}\circ L(x).$
\end{proof}

We call a vector space an \emph{inner product space} if it is equipped with an inner product. 
If $V$ is an inner product space equipped with the inner product $\langle$ , $\rangle_V$, then define the associated norm $||$ $||_V:V\rightarrow [0,\infty)$ by $||x||_V=(\langle x,x\rangle_V)^{\frac{1}{2}}$ and define the associated metric $\rho_V:V\times V\rightarrow [0,\infty)$ by $\rho_V(x,y)=||x-y||_V$.
The metric $\rho_V$ determines a topology on $V$.  
If $V$ and $W$ are inner product spaces, then a function $f:V\rightarrow W$ is an \emph{isometry} if $f(V)=W$ and $\rho_W(f(x),f(y))=\rho_V(x,y)$ for all $x,y\in V$.
Note that the latter condition is equivalent to the statement $||f(x)-f(y)||_W=||x-y||_V$ for all $x, y \in V$. 
Observe that if $V$ is an inner product space and $p\in V$, then the translation $T_p:V\rightarrow V$ is an isometry.

For a vector space $V$ and an integer $k\geq 1$, let $V^k=\{(v_1,v_2,\cdots,v_k):v_i\in V$ for $1\leq i\leq k\}$.  Let $V$ and $W$ be vector spaces, let $k\geq 1$ and let $f:V^k\rightarrow W$ be a function.
f is \emph{multilinear} if
\[
f(v_1,\cdots,v_{i-1},av_i+bw_i,v_{i+1},\cdots,v_k)=
\]
\[
af(v_1,\cdots,v_{i-1},v_i,v_{i+1},\cdots,v_k)+bf(v_1,\cdots,v_{i-1},w_i,v_{i+1},\cdots,v_k)
\]
for all $1\leq i\leq k$, all $v_1,\cdots,v_{i-1},v_i,w_i,v_{i+1},\cdots,v_k\in V$ and all $a,b\in \mathbb{R}$.  When $k=2$ and $f$ is multilinear, then we say that $f$ is \emph{bilinear}.  f is \emph{alternating} if
\[
f(v_1,\cdots,v_{i-1},v_j,v_{i+1},\cdots,v_{j-1},v_i,v_{j+1},\cdots,v_k)=
\]
\[
-f(v_1,\cdots,v_{i-1},v_i,v_{i+1},\cdots,v_{j-1},v_j,v_{j+1},\cdots,v_k)
\]
for all $1\leq i<j\leq k$ and all $v_1,\cdots,v_{i-1},v_i,v_{i+1},\cdots,v_{j-1},v_j,v_{j+1},\cdots,v_k\in V$.  (Equivalently, f is alternating if 
\[
f(v_1,\cdots,v_{i-1},v_i,v_{i+1},\cdots,v_{j-1},v_j,v_{j+1},\cdots,v_k)=0
\]
whenever $v_i=v_j$.)

Let $V$ be a vector space and let $k\geq 1$ be an integer.
Then there is a vector space $\Lambda_k(V)$ and a multilinear map $\textit{i}:V^k\rightarrow\Lambda_k(V)$ with the following \emph{universality property}.
If $f:V^k\rightarrow W$ is a multilinear alternating map to a vector space $W$, then there is a unique linear map $\Phi:\Lambda_k(V)\rightarrow W$ such that $\Phi\circ\textit{i}= f$.
It is common practice to denote the image $\textit{i}(v_1,v_2,\cdots,v_k)$ by $v_1\wedge v_2\wedge\cdots\wedge v_k$ and to call this point a \emph{$k$-vector} or a \emph{wedge product} of $k$ elements of $V$.

For a textbook treatment of $\Lambda_k(V)$, we refer the reader to pages 446-449 of \cite{dummit} and to \cite{wiki}.  
In those pages, it is revealed that $\Lambda_k(V)$ is a vector subspace of a larger vector space $\Lambda(V)$ called the \emph{exterior algebra} of $V$.  
$\Lambda(V)$ is the direct sum of its subspaces $\{\Lambda_k(V):k\geq 0\}$, and $\Lambda(V)$ is called an \emph{algebra} because there is a product $(x,y)\mapsto x\wedge y:\Lambda(V)\times\Lambda(V)\rightarrow\Lambda(V)$ which maps $\Lambda_k(V)\times\Lambda_m(V)$ into $\Lambda_{k+m}(V)$.  
We will now list the facts about $\Lambda_k(V)$ that we will need in the sequel, and we will justify those facts when the justifications aren’t obvious.  
The reader can consult \cite{dummit} and \cite{wiki} for a broader and more detailed overview of this topic.

The existence of $\Lambda_k(V)$ can be proved by identifying it with the quotient space $\Phi(V^k)/K$ where $\Phi(V^k)$ is a vector space with basis $V^k$ and $K$ is the vector subspace of $\Phi(V^k)$ generated by all elements of $\Phi(V^k)$ of the form
\[
(v_1,\cdots,v_{i-1},av_i+bw_i,v_{i+1},\cdots,v_k)
\]
\[
-a(v_1,\cdots,v_{i-1},v_i,v_{i+1},\cdots,v_k)-b(v_1,\cdots,v_{i-1},w_i,v_{i+1},\cdots,v_k)
\]
where $1\leq i\leq k$, $v_1,\cdots,v_{i-1},v_i,w_i,v_{i+1},\cdots,v_k\in V$ and $a,b\in \mathbb{R}$ together with all elements of $\Phi(V^k)$ of the form
\[
(v_1,\cdots,v_i,\cdots,v_j,\cdots,v_k)
\]
where $1\leq i<j\leq k$, $v_1,\cdots,v_i,\cdots,v_j,\cdots,v_k\in V$ and $v_i=v_j$.\footnote{For a set $X$, a vector space $\Phi(X)$ with basis $X$ can be obtained by setting $\Phi(X)$ equal to the set of all functions $\phi:X\rightarrow\mathbb{R}$ such that $\{x\in X:\phi(x)\neq0\}$ is finite, and endowing $\Phi(X)$ with the operations of pointwise addition and pointwise scalar multiplication.  
For each $x\in X$, define $\phi_x\in\Phi(X)$ by $\phi_x(x)=1$ and 
$\phi_x(y)=0$ for $y\in X-\{x\}$.  
Then $\{\phi_x:x\in X\}$ is a basis for $\Phi(X)$.  
By identifying each $x\in X$ with $\phi_x\in\Phi(X)$, we make $X$ a basis for $\Phi(X)$.}

We make two additional remarks about $\Lambda_k(V)$.
\begin{remark}
The uniqueness assertion in the universality property stated in the definition of $\Lambda_k(V)$ implies that the set $\textit{i}(V^k)=\{v_1\wedge v_2\wedge\cdots\wedge v_k:(v_1,v_2,\cdots,v_k)\in V^k\}$ generates $\Lambda_k(V)$.
\end{remark}
\begin{remark}
Suppose $(u_1,u_2,\cdots,u_n)$ is an ordered basis for $V$ and $1\leq k\leq n$.
For $J\in\mathcal{S}(n,k)$, let $1\leq j_1<j_2<\cdots<j_k\leq n$ so
that $J=\{j_1,j_2,\cdots,j_k\}$, let $\textbf{u}_J=(u_{j_1},u_{j_2},\cdots,u_{j_k})$ and let $\wedge_J\textbf{u}=\textit{i}(\textbf{u}_J)=u_{j_1}\wedge u_{j_2}\wedge\cdots\wedge u_{j_k}$.
Then $\{\wedge_J\textbf{u}:J\in\mathcal{S}(n,k)\}$ is a basis for $\Lambda_k(V)$.
Hence, if $1\leq k\leq n$ and $dim(V)=n$, then $dim(\Lambda_k(V))=\binom{n}{k}$.
The universality property is used to prove that $\{\wedge_J\textbf{u}:J\in\mathcal{S}(n,k)\}$ is linearly independent.
\end{remark}

Now suppose that $V$ is an $n$-dimensional inner product space with inner product $\langle\;,\;\rangle$ and $1\leq k\leq n$.
We will define an inner product $\langle\langle\;,\;\rangle\rangle$ on $\Lambda_k(V)$ with the property that if $(u_1,u_2,\cdots,u_n)$ is an ordered orthonormal basis for $V$, then $\{\wedge_J\textbf{u}:J\in\mathcal{S}(n,k)\}$ is an orthonormal basis for $\Lambda_k(V)$.
Define $\prec\;,\;\succ:V^k\times V^k\rightarrow\mathbb{R}$ by $\prec(v_1,v_2,\cdots,v_k),(w_1,w_2,\cdots,w_k)\succ\;=Det(\langle v_i,w_j\rangle)$.
Observe that $\prec\;,\;\succ$ is multilinear in the sense that it is linear in each of the variables $v_1,v_2,\cdots,v_k,w_1,w_2,\cdots,w_k$.
Also observe that $\prec\;,\;\succ$ is \emph{bi-alternating}; 
in other words, $\prec\;,\;\succ$ is alternating separately in the variables $v_1,v_2,\cdots,v_k$ and in the variables $w_1,w_2,\cdots,w_k$; that is, $\prec(v_1,v_2,\cdots,v_k),(w_1,w_2,\cdots,w_k)\succ\;=0$ whenever either $v_i=v_j$ or $w_i=w_j$ for $1\leq i < j\leq k$.
Further observe that if $(u_1,u_2,\cdots,u_n)$ is an ordered basis for $V$, then for $J,K\in\mathcal{S}(n,k)$: $\prec\textbf{u}_J,\textbf{u}_K\succ\;=1$ if $J=K$ and $\prec\textbf{u}_J,\textbf{u}_K\succ\;=0$ if $J\neq K$.
To ``lift" $\prec\;,\;\succ$ to an inner product on $\Lambda_k(V)$, we require the following generalization of the universality property.

\begin{lemma}
If $\alpha:V^k\times V^k\rightarrow W$ is a multilinear bi-alternating map to a vector space W, then there is a unique bilinear map $\beta:\Lambda_k(V)\times\Lambda_k(V)\rightarrow W$ such that $\beta\circ(\textit{i}\times \textit{i})=\alpha$.
\end{lemma}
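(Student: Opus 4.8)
The plan is to apply the universality property of $\Lambda_k(V)$ twice: once to collapse the first $V^k$-factor of $\alpha$, and once more to collapse the second. First I would fix $v=(v_1,\dots,v_k)\in V^k$ and consider the map $\alpha(v,\,\cdot\,):V^k\to W$ sending $w\mapsto\alpha(v,w)$. Since $\alpha$ is multilinear and bi-alternating, this map is multilinear and alternating, so the universality property yields a unique linear map $\Phi_v:\Lambda_k(V)\to W$ with $\Phi_v\circ\textit{i}=\alpha(v,\,\cdot\,)$; that is, $\Phi_v(w_1\wedge\cdots\wedge w_k)=\alpha(v,w_1,\dots,w_k)$ for all $(w_1,\dots,w_k)\in V^k$.

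Next I would show that the assignment $v\mapsto\Phi_v$ is a multilinear alternating map from $V^k$ into the vector space $\mathrm{Hom}(\Lambda_k(V),W)$. This is the conceptual heart of the argument, and it rests on the observation (the uniqueness clause of the universality property, equivalently the Remark that $\textit{i}(V^k)$ generates $\Lambda_k(V)$) that two linear maps out of $\Lambda_k(V)$ coincide as soon as they agree on all wedge products $w_1\wedge\cdots\wedge w_k$. To verify multilinearity in, say, the $i$-th slot, I would evaluate both $\Phi_{(v_1,\dots,av_i+bv_i',\dots,v_k)}$ and $a\Phi_{(v_1,\dots,v_i,\dots,v_k)}+b\Phi_{(v_1,\dots,v_i',\dots,v_k)}$ on an arbitrary $w_1\wedge\cdots\wedge w_k$; both yield $\alpha$ evaluated at the corresponding arguments, and these agree because $\alpha$ is linear in its $i$-th slot, so the two linear maps are equal. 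Alternation is similar: if $v_i=v_j$, then $\Phi_v$ vanishes on every $w_1\wedge\cdots\wedge w_k$ because $\alpha$ is alternating in its first block, so $\Phi_v=0$.

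Having established this, I would apply the universality property a second time to the multilinear alternating map $v\mapsto\Phi_v:V^k\to\mathrm{Hom}(\Lambda_k(V),W)$, obtaining a unique linear map $\Psi:\Lambda_k(V)\to\mathrm{Hom}(\Lambda_k(V),W)$ with $\Psi(v_1\wedge\cdots\wedge v_k)=\Phi_{(v_1,\dots,v_k)}$. Then I would define $\beta:\Lambda_k(V)\times\Lambda_k(V)\to W$ by $\beta(\eta,\xi)=\Psi(\eta)(\xi)$. This $\beta$ is linear in $\eta$ because $\Psi$ is linear and evaluation at a fixed $\xi$ is linear, and it is linear in $\xi$ because each $\Psi(\eta)$ is itself a linear map; hence $\beta$ is bilinear. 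Moreover $\beta(\textit{i}(v),\textit{i}(w))=\Psi(\textit{i}(v))(\textit{i}(w))=\Phi_v(\textit{i}(w))=\alpha(v,w)$, so $\beta\circ(\textit{i}\times\textit{i})=\alpha$.

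For uniqueness, suppose $\beta'$ is another bilinear map with $\beta'\circ(\textit{i}\times\textit{i})=\alpha$. For each fixed $v\in V^k$, the linear maps $\beta(\textit{i}(v),\,\cdot\,)$ and $\beta'(\textit{i}(v),\,\cdot\,)$ agree on the generating set $\textit{i}(V^k)$, hence are equal, so $\beta(\textit{i}(v),\xi)=\beta'(\textit{i}(v),\xi)$ for all $v$ and all $\xi$. Then for each fixed $\xi$, the linear maps $\beta(\,\cdot\,,\xi)$ and $\beta'(\,\cdot\,,\xi)$ agree on $\textit{i}(V^k)$, hence are equal, and therefore $\beta=\beta'$. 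I do not expect a genuine obstacle here; the only point requiring care throughout is the repeated use of the principle that agreement on wedge products forces equality of linear maps out of $\Lambda_k(V)$, since that is precisely what makes the two successive applications of universality legitimate.
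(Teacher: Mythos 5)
Your proposal is correct and follows essentially the same two-step application of the universality property as the paper's own proof, differing only in the inessential choice of which block of variables is fixed first and in spelling out the verifications the paper leaves to the reader.
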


\begin{proof}
For each $\textbf{w}\in V^k$, define $\alpha_{\textbf{w}}:V_k\rightarrow W$ by $\alpha_{\textbf{w}}(\textbf{v})=\alpha(\textbf{v},\textbf{w})$.  
Then each $\alpha_{\textbf{w}}$ is a multilinear alternating map.  
Hence, for each $\textbf{w}\in V^k$, the universality property provides a unique linear map $\phi_{\textbf{w}}:\Lambda_k(V)\rightarrow W$ such that 
$\phi_{\textbf{w}}\circ\textit{i}=\alpha_{\textbf{w}}$.  
Let $\mathcal{L}(\Lambda_k(V),W)$ denote the set of all linear maps from $\Lambda_k(V)$ to $W$.  
Regard $\mathcal{L}(\Lambda_k(V),W)$ as a vector space with respect to the operations of pointwise addition and pointwise scalar multiplication.  
Define $\Phi:V^k\rightarrow\mathcal{L}(\Lambda_k(V),W)$ by $\Phi(\textbf{w})=\phi_{\textbf{w}}$.  
It is easy to verify that $\Phi$ is multilinear and alternating.  
Therefore the universality property provides a unique linear map $\Psi:\Lambda_k(V)\rightarrow\mathcal{L}(\Lambda_k(V),W)$ such that $\Psi\circ\textit{i}=\Phi$. 
Define $\beta:\Lambda_k(V)\times\Lambda_k(V)\rightarrow W$ by $\beta(x,y)=\Psi(y)(x)$.  
$\beta$ is clearly bilinear and clearly satisfies $\beta\circ(\textit{i}\times\textit{i})=\alpha$.  
The uniqueness of $\beta$ follows from the fact that $\textit{i}(V^k)$ generates $\Lambda_k(V).$ 
\end{proof}

This lemma implies that there is a bilinear map $\langle\langle\;,\;\rangle\rangle:\Lambda_k(V)\times\Lambda_k(V)\rightarrow \mathbb{R}$ such that $\langle\langle\;,\;\rangle\rangle\circ(\textit{i}\times\textit{i})=\prec\;,\;\succ$.  
Hence, if $(u_1,u_2,\cdots,u_n)$ is an ordered orthonormal basis for $V$, then for $J,K\in\mathcal{S}(n,k)$: $\langle\langle\wedge_J\textbf{u},\wedge_K\textbf{u}\rangle\rangle=\;\prec\textbf{u}_J,\textbf{u}_K\succ\;=1$ if $J=K$ and $\langle\langle\wedge_J\textbf{u},\wedge_K\textbf{u}\rangle\rangle=\;\prec\textbf{u}_J,\textbf{u}_K\succ\;=0$ if $J\neq K$.
Thus, $\{\wedge_J\textbf{u}:J\in\mathcal{S}(n,k)\}$ is orthonormal with respect to $\langle\langle\;,\;\rangle\rangle$.  
It remains to prove that $\langle\langle\;,\;\rangle\rangle$ is positive definite.  
This follows easily because $\{\wedge_J\textbf{u}:J\in\mathcal{S}(n,k)\}$ is a basis for $\Lambda_k(V)$ that is orthonormal with respect to $\langle\langle\;,\;\rangle\rangle$ and because $\langle\langle\;,\;\rangle\rangle$ is bilinear.  
These observations combined with the Remark 2 above  tell us that $\{\wedge_J\textbf{u}:J\in\mathcal{S}(n,k)\}$ is an orthonormal basis for $\Lambda_k(V)$ with respect to the inner product $\langle\langle\;,\;\rangle\rangle$.

We finish this section by establishing some matrix notation that will be used later.  
Suppose $\textbf{u}=(u_1,u_2,\cdots,u_k)$ is an ordered basis for a vector space $V$ and $\textbf{v}=(v_1,v_2,\cdots,v_m)$ is an m-tuple of elements of $V$.  
Let $C_{\textbf{u}}(\textbf{v})$ be the $m\times k$ matrix $(c_{i,j})$ determined by the equations 
\[
v_i=\sum_{1\leq j\leq k}c_{i,j}u_j
\]
for $1\leq i\leq m$, and call $C_{\textbf{u}}(\textbf{v})$ the \emph{coordinate matrix of $\textbf{v}$ with respect to $\textbf{u}$}. 
(The $i^{th}$ row of $C_{\textbf{u}}(\textbf{v})$ lists the coordinates of $v_i$ with respect to \textbf{u}.)   
Note that since $u_1,u_2,\cdots,u_k$ are linearly independent, then $C_{\textbf{u}}(\textbf{v})$ is uniquely determined by \textbf{u} and \textbf{v}.  
Observe that $C_{\textbf{u}}(\textbf{u})$ is the $k\times k$ identity matrix. 

Suppose $\textbf{u}=(u_1,u_2,\cdots,u_k)$ and $\textbf{v}=(v_1,v_2,\cdots,v_m)$ are ordered bases for vector spaces $U$ and $V$, respectively, and $L:U\rightarrow V$ is a linear map.  
Let $C_{\textbf{u},\textbf{v}}(L)$ be the $k\times m$ matrix
$(a_{i,j})$ determined by the equations
\[
L(u_i)=\sum_{1\leq j\leq m}a_{i,j}v_j
\]
for $1\leq i\leq k$, and call $C_{\textbf{u},\textbf{v}}(L)$ the \emph{coordinate matrix of $L$ with respect to \textbf{u} and \textbf{v}}.  
(The $i^{th}$ row of $C_{\textbf{u},\textbf{v}}(L)$ lists the coordinates of $L(\textbf{u}_i)$ with respect to \textbf{v}.)   
As above, $C_{\textbf{u},\textbf{v}}(L)$ is uniquely determined by $\textbf{u}$, $\textbf{v}$ and $L$ because $v_1,v_2,\cdots,v_m$ are linearly independent.  
We state without proof:
\begin{proposition}
    If $\textbf{u}=(u_1,u_2,\cdots,u_k)$ and $\textbf{v}=(v_1,v_2,\cdots,v_m)$ are ordered bases for vector spaces $U$ and $V$, respectively, $L:U\rightarrow V$ is a linear map, $\textbf{w}=(w_1,w_2,\cdots,w_r)$ is an $r$-tuple of elements of $U$, and 
\[
    L(\textbf{w})=(L(w_1),L(w_2),\cdots,L(w_r)),
\] 
    then $C_{\textbf{v}}(L(\textbf{w}))=C_{\textbf{u}}(\textbf{w})C_{\textbf{u},\textbf{v}}(L)$.
    In particular, if $V=U$ and $\textbf{w}=\textbf{v}=\textbf{u}$, then $C_{\textbf{u}}(L(\textbf{u}))=C_{\textbf{u}}(\textbf{u})C_{\textbf{u},\textbf{u}}(L)=C_{\textbf{u},\textbf{u}}(L)$.  \qed
\end{proposition}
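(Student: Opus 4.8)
The plan is to unwind all three coordinate matrices into their defining scalar equations and then use the linearity of $L$ together with the uniqueness of coordinate representations. Write $C_{\textbf{u}}(\textbf{w}) = (c_{i,j})$, so that $w_i = \sum_{1 \le j \le k} c_{i,j} u_j$ for $1 \le i \le r$; write $C_{\textbf{u},\textbf{v}}(L) = (a_{j,l})$, so that $L(u_j) = \sum_{1 \le l \le m} a_{j,l} v_l$ for $1 \le j \le k$; and write $C_{\textbf{v}}(L(\textbf{w})) = (d_{i,l})$, so that $L(w_i) = \sum_{1 \le l \le m} d_{i,l} v_l$ for $1 \le i \le r$. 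The goal then reduces to showing $d_{i,l} = \sum_{1 \le j \le k} c_{i,j} a_{j,l}$ for all $i$ and $l$, since the right-hand side is precisely the $(i,l)$ entry of the matrix product $C_{\textbf{u}}(\textbf{w})\, C_{\textbf{u},\textbf{v}}(L)$.

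The key computation is to apply $L$ to the expansion $w_i = \sum_j c_{i,j} u_j$, pull $L$ through the finite sum by linearity, substitute $L(u_j) = \sum_l a_{j,l} v_l$, and interchange the two finite sums to obtain $L(w_i) = \sum_l \bigl(\sum_j c_{i,j} a_{j,l}\bigr) v_l$. Comparing this with the expansion $L(w_i) = \sum_l d_{i,l} v_l$ and invoking the linear independence of $v_1, v_2, \ldots, v_m$ — the same fact already used to note that coordinate matrices are uniquely determined — forces $d_{i,l} = \sum_j c_{i,j} a_{j,l}$, which is exactly the desired entrywise identity. For the ``in particular'' clause, I would specialize to $V = U$ and $\textbf{w} = \textbf{v} = \textbf{u}$: then $C_{\textbf{u},\textbf{v}}(L)$ becomes $C_{\textbf{u},\textbf{u}}(L)$, $C_{\textbf{v}}(L(\textbf{w}))$ becomes $C_{\textbf{u}}(L(\textbf{u}))$, and $C_{\textbf{u}}(\textbf{w})$ becomes $C_{\textbf{u}}(\textbf{u})$, which was already observed to be the $k \times k$ identity matrix, so the general formula collapses to $C_{\textbf{u}}(L(\textbf{u})) = C_{\textbf{u},\textbf{u}}(L)$.

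I do not expect any genuine obstacle here: this is the familiar statement that the matrix of a composite map is the product of the matrices, recast in the notation of coordinate matrices of tuples. The only point demanding a little care is the index bookkeeping — keeping straight which subscript ranges over $\textbf{u}$, which over $\textbf{v}$, and which over $\textbf{w}$ — together with the appeal to linear independence of $\textbf{v}$ to read off the coefficients $d_{i,l}$; everything else is a routine double-sum manipulation.
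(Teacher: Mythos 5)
Your proof is correct, and it is the standard argument the paper implicitly has in mind: the paper explicitly states this proposition without proof, so there is nothing to compare against. Your expansion of $w_i$ in the basis $\textbf{u}$, application of linearity of $L$, substitution of the expansion of $L(u_j)$ in $\textbf{v}$, interchange of the two finite sums, and appeal to the linear independence of $v_1,\dots,v_m$ to identify coefficients is exactly the routine verification being omitted, and your index bookkeeping (the product of an $r\times k$ with a $k\times m$ matrix yielding the $r\times m$ matrix $C_{\textbf{v}}(L(\textbf{w}))$) is consistent with the paper's row conventions for coordinate matrices.
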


If $A=(a_{i,j})$ is a $k\times m$ matrix, recall that the \emph{transpose} of $A$ is the $m\times k$ matrix $A^T=(a_{i,j}^T)$ defined by $a_{i,j}^T=a_{j,i}$ for $1\leq i\leq m, 1\leq j\leq k$.
Also recall that if $A$ is a $k\times k$ matrix, then $Det(A^T)=Det(A)$.

\section{Measure theoretic preliminaries}
$k$-dimensional Lebesgue measure is typically developed for subsets of $\mathbb{E}^k$.
For the purposes of this article, it is more useful to formulate $k$-dimensional Lebesgue measure in a slightly more general context - for subsets of $k$-dimensional inner product spaces.
Therefore, we will state the basic definitions and relevant fundamental results of measure theory in this more general setting.
The proofs of the theorems about Lebesgue measure stated in this section, at least in the setting of $\mathbb{E}^k$, can be found in the standard texts on measure theory, and references to these proofs are provided.  The process of generalizing these proofs to the setting of inner product spaces is, for the most part, routine and is left to the reader.

A collection $\mathcal{M}$ of subsets of a set $X$ is a \emph{$\sigma$-algebra} on $X$ if it satisfies the following three properties:  
\textit{i})\;$\emptyset\in\mathcal{M}$.  
\textit{ii})\;$\mathcal{M}$ is \emph{closed under the formation of countable unions;} 
i.e., if $\mathcal{A}$ is a countable subcollection of $\mathcal{M}$, then $\bigcup\mathcal{A}\in\mathcal{M}$.  
\textit{iii})\;$\mathcal{M}$ is \emph{closed under the formation of complements;} 
i.e., if $A\in\mathcal{M}$, then $X-A\in\mathcal{M}$.
It follows easily that every $\sigma$-algebra $\mathcal{M}$ on X also satisfies the following properties:
\textit{iv})\;$\mathcal{M}$ is \emph{closed under the formation of differences;}
i.e., if $A,B\in\mathcal{M}$, then $A-B\in\mathcal{M}$.
\textit{v})\;$\mathcal{M}$ is \emph{closed under the formation of countable intersections;}
i.e., if $\mathcal{A}$ is a countable subcollection of $\mathcal{M}$, then $\bigcap\mathcal{A}\in\mathcal{M}$.

A \emph{measure} on a set $X$ is a function $\mu:\mathcal{M}\rightarrow[0,\infty]$ where $\mathcal{M}$ is a $\sigma$-algebra on $X$ and $\mu$  satisfies the following two properties: 
\textit{i})\;$\mu(\emptyset)=0$.
\textit{ii})\;$\mu$ is \emph{countably additive};
i.e., if $\mathcal{A}$ is a countable pairwise disjoint subcollection of $\mathcal{M}$, then $\mu(\bigcup\mathcal{A})=\sum_{A\in\mathcal{A}}\mu(A)$.
It is easily proved that every measure $\mu$ satisfies the following additional properties:
\textit{iii})\;$\mu$ is \emph{monotone};
i.e., if $A,B\in\mathcal{M}$ and $A\subset B$, then $\mu(A)\leq\mu(B)$.
\textit{iv})\;$\mu$ is \emph{countably subadditive};
i.e., if $\mathcal{A}$ is a countable (not necessarily pairwise disjoint) subcollection of $\mathcal{M}$, then $\mu(\bigcup\mathcal{A})\leq\sum_{A\in\mathcal{A}}\mu(A)$.
\textit{v})\;If $A_1\subset A_2\subset A_3\subset\cdots$ is an increasing sequence of elements of $\mathcal{M}$, then $\mu(\bigcup_{i\geq 1}A_i)=sup\{\mu(A_i):i\geq 1\}$.
\textit{vi})\;If $A_1\supset A_2\supset A_3\supset\cdots$ is a decreasing sequence of elements of $\mathcal{M}$ and $\mu(A_1)<\infty$, then $\mu(\bigcap_{i\geq 1}A_i)=inf\{\mu(A_i):i\geq 1\}$.

A measure $\mu:\mathcal{M}\rightarrow[0,\infty]$ on a set $X$ is \emph{complete} if every subset of a measure 0 element of $\mathcal{M}$ is an element of $\mathcal{M}$. 
A simple argument shows that if $\mu:\mathcal{M}\rightarrow[0,\infty]$ is a complete measure on $X$, then for $A\subset B\subset C\subset X$, if $A,C\in\mathcal{M}$ and $\mu(A)=\mu(C)$, then $B\in\mathcal{M}$.

A $\sigma$-algebra $\mathcal{M}$ on a topological space $X$ is called a \emph{Borel $\sigma$-algebra} if $\mathcal{M}$ contains every open subset of X.  A measure $\mu:\mathcal{M}\rightarrow[0,\infty]$ on a topological space is called a \emph{Borel measure} if $\mathcal{M}$ us a Borel $\sigma$-algebra.

A measure $\mu:\mathcal{M}\rightarrow[0,\infty]$ on a topological space $X$ is \emph{regular} if $\mathcal{M}$ is a Borel $\sigma$-algebra and for every $A\in\mathcal{M}$,
\[
\mu(A)=inf\{\mu(U):U\text{ is an open subset of }X\text{ and }A\subset U\}
\]
\[
=sup\{\mu(C):C\text{ is a closed subset of }X\text{ and }C\subset A\}. 
\]

A measure $\mu:\mathcal{M}\rightarrow[0,\infty]$ on a vector space $V$ is \emph{translation invariant} if for every $A\in\mathcal{M}$ and every $p\in V$, $\mu(T_p(A))=\mu(A)$. 
(Recall that $T_p:V\rightarrow V$ is the translation specified by $T_p(x)=x+p$.)

A \emph{pseudometric} on a set $X$ is a function $\rho:X\times X\rightarrow [0,\infty]$ which for all $x,y\text{ and }z\in X$ has the following three properties: 
\textit{i})\;$\rho(x,x)=0$.
\textit{ii})\;$\rho(x,y)=\rho(y,x)$.
\textit{iii})\;$\rho$ satisfies the \emph{triangle inequality}; i.e., $\rho(x,z)\leq\rho(x,y)+\rho(y,z)$.
If $\rho$ is a pseudometric on $X$, then for every $x\in X$ and every $\epsilon>0$, let $\mathcal{N}_{\rho}(x,\epsilon)=\{y\in X:\rho(x,y)<\epsilon\}$.
Then $\{\mathcal{N}_{\rho}(x,\epsilon):x\in X\text{ and }\epsilon>0\}$ is a basis for a topology on $X$ called the \emph{topology on $X$ determined by $\rho$}.  
If $\rho$ is a pseudometric on a set $X$, then the pair 
$(X,\rho)$ is called a \emph{pseudometric space}.

For two sets $A$ and $B$, the \emph{symmetric difference of $A$ and $B$} is the set $A\Delta B=(A-B)\cup(B-A)$.

Let $\mu:\mathcal{M}\rightarrow[0,\infty]$ be a measure on a set $X$.  
A pseudometric $\rho$ on $\mathcal{M}$, called the \emph{symmetric difference pseudometric associated with $\mu$}, is defined by $\rho(A,B)=\mu(A\Delta B)$.
(The verification that $\rho$ is a pseudometric on $\mathcal{M}$ is straightforward. 
The verification of the triangle inequality depends on the fact that for $A,B,C\in\mathcal{M},A\Delta C\subset(A\Delta B)\cup(B\cup C)$.)
We note that $\rho$ satifies the inequality $|\mu(A)-\mu(B)|\leq\rho(A\Delta B)$.
(\emph{Proof.} $|\mu(A)-\mu(B)|=|(\mu(A-B)+\mu(A\cap B))-(\mu(B-A)+\mu(A\cap B))|=|\mu(A-B)-\mu(B-A)|\leq\mu(A-B)+\mu(B-A)=\rho(A,B)$. \qedsymbol)
It follows that if $\mathcal{M}$ is equipped with the topology determined by $\rho$, then $\mu:\mathcal{M}\rightarrow[0,\infty]$ is continuous.

Let $V$ be a $k$-dimensional inner product space, let $\textbf{u}=(u_1,u_2,\cdots,u_k)$ be an ordered orthonormal basis for $V$, and let $[\textbf{u}]=[u_1,u_2,\cdots,u_k]=\{\sum_{1\leq i\leq k}t_iu_i:0\leq t_i\leq1\text{ for }1\leq i\leq k\}$.  
A measure $\mu:\mathcal{M}(V,\textbf{u})\rightarrow [0,\infty]$ is called a \emph{$k$-dimensional Lebesgue \textbf{u}-measure on V} if $\mu$ is a complete, Borel, regular, translation invariant measure such that $\mu([\textbf{u}])=1$.

A measure $\mu:\mathcal{M}(V,\textbf{u})\rightarrow [0,\infty]$ on a set X is \emph{$\sigma$-finite} if there is a countable subset $\mathcal{A}$ of $\mathcal{M}$ such that $\bigcup\mathcal{A}=X$ and $\mu(A)<\infty$ for every $A\in\mathcal{A}$.  
If $V$ is a $k$-dimensional inner product space and \textbf{u} is an ordered orthonormal basis for $V$, then every $k$-dimensional Lebesgue \textbf{u}-measure $\mu$ on $V$ is $\sigma$-finite.
Indeed, if $P=\{\sum_{1\leq i\leq k}n_iu_i:n_i\text{ is an integer for }1\leq i\leq k\}$, then $V$ is the union of the countable collection $\{T_p([\textbf{u}]):p\in P\}$ and $\mu(T_p([\textbf{u}]))=\mu([\textbf{u}])=1$ for each $p\in P$.

\begin{existence theorem}
If $V$ is a $k$-dimensional inner product space and \textbf{u} is an ordered orthonormal basis for $V$, then a $k$-dimensional Lebesgue \textbf{u}-measure on $V$ exists.
\end{existence theorem}

The statement and a proof of the Existence Theorem can be cobbled together from Theorems 20.1.11, 20.1.13 and Problem 20.2.20 on pages 426-427, 427-428 and 435, respectively, of \cite{royden}.

\begin{uniqueness theorem}
Suppose $V$ is a $k$-dimensional inner product space, \textbf{u} is an ordered orthonormal bases for $V$ and $\mu:\mathcal{M}\rightarrow[0,\infty]$ is a $k$-dimensional Lebesgue \textbf{u}-measure on $V$.  
If $\mu^\prime:\mathcal{M}^\prime\rightarrow[0,\infty]$ is a Borel, translation invariant measure on V such that $\mu^\prime(U_0)<\infty$ for some non-empty open subset $U_0$ of $V$, then either $\mu^\prime=0$ or there is a $\kappa\in(0,\infty)$ such that $\mu^\prime\vert\mathcal{M}\cap\mathcal{M}^\prime=\kappa\mu\vert\mathcal{M}\cap\mathcal{M}^\prime$.
If, in addition, $\mu^\prime$ is non-zero, complete and regular, then $\mathcal{M}^\prime=\mathcal{M}$.
\end{uniqueness theorem}

A proof of the Uniqueness Theorem 3.2 can be derived, with some effort, from a result known as the Carath\'{e}odory-Hahn Theorem (pages 356-357 of \cite{royden}).

We note two important consequences of the Uniqueness Theorem.  First, if \textbf{u} is an ordered orthonormal basis for a $k$-dimensional inner product space $V$ and $\mu:\mathcal{M}\rightarrow[0,\infty]$ and $\mu^\prime:\mathcal{M}^\prime\rightarrow[0,\infty]$ are both $k$-dimensional Lebesgue \textbf{u}-measures on $V$, then $\mathcal{M}=\mathcal{M}^\prime$ and $\mu=\mu^\prime$.
Indeed, the Uniqueness Theorem implies $\mathcal{M}^\prime=\mathcal{M}$ and $\mu^\prime=\kappa\mu$ for some $\kappa\in(0,\infty)$.
However, $\kappa=\kappa\mu([\textbf{u}])=\mu^\prime([\textbf{u}])=1$.
In other words, for every ordered orthonormal basis \textbf{u} for V, there is \emph{one and only one} $k$-dimensional Lebesgue \textbf{u}-measure on $V$.
Second, if \textbf{u} and $\textbf{u}^\prime$ are ordered orthonormal bases for the $k$-dimensional inner product space $V$, $\mu:\mathcal{M}\rightarrow[0,\infty]$ is a $k$-dimensional Lebesgue \textbf{u}-measure on $V$ and $\mu^\prime:\mathcal{M}^\prime\rightarrow[0,\infty]$ is a $k$-dimensional Lebesgue \textbf{u}$^\prime$-measure on $V$, then the Uniqueness Theorem implies $\mathcal{M}=\mathcal{M}^\prime$.
In other words, there is \emph{one and only one} $\sigma$-algebra that can serve as the domain of a $k$-dimensional Lebesgue \textbf{u}-measure on $V$ independent of the choice of the ordered orthonormal basis \textbf{u}.
This observation justifies the following terminology.
If the $\sigma$-algebra $\mathcal{M}$ on $V$ is the domain of a $k$-dimensional Lebesgue \textbf{u}-measure on V for some orthonormal basis \textbf{u} for $V$, then we call the elements of $\mathcal{M}$ the \emph{Lebesgue measurable subsets} of $V$ and we denote $\mathcal{M}$ by $\mathcal{M}(V)$.

The Uniqueness Theorem alone does not tell us that all Lebesgue measures on a finite dimensional inner product space are identical.  More precisely, the Uniqueness Theorem alone does not tell us that $\mu=\mu^\prime$ in the case that $\mu:\mathcal{M}(V)\rightarrow[0,\infty]$ is a $k$-dimensional Lebesgue \textbf{u}-measure on V and $\mu^\prime:\mathcal{M}(V)\rightarrow[0,\infty]$ is a $k$-dimensional Lebesgue \textbf{u}$^\prime$-measure on V where \textbf{u} and \textbf{u}$^\prime$ are \emph{distinct} ordered orthonormal bases for $V$.  
To reach that conclusion, we also need the following result.

\begin{theorem}
Suppose $V$ and $W$ are $k$-dimensional inner product spaces with ordered orthonormal bases \textbf{v} and \textbf{w}, respectively, $\lambda:\mathcal{M}(V)\rightarrow [0,\infty]$ is a $k$-dimensional Lebesgue \textbf{v}-measure on V and $\mu:\mathcal{M}(W)\rightarrow [0,\infty]$ is a $k$-dimensional Lebesgue \textbf{w}-measure on W.  If $f:V\rightarrow W$ is an isometry, then for every $A\in\mathcal{M}(V)$, $f(A)\in\mathcal{M}(W)$ and $\mu(f(A))=\lambda(A)$. 
\end{theorem}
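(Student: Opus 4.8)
\emph{Overview.} The plan is to reduce everything to the case of a \emph{linear} isometry, then carry $\lambda$ over to $W$ as a pushforward measure and identify it with $\mu$ by means of the Uniqueness Theorem; the only genuinely non-routine point will be that the comparison constant equals $1$, which forces a ball rather than a cube into the argument.

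\emph{Step 1 (isometries are affine).} I would first put $g=T_{-f(0)}\circ f$. As a composition of isometries, $g$ is an isometry, and $g(0)=0$, so $g$ preserves norms and hence, by polarization, inner products: $\langle g(x),g(y)\rangle_{W}=\langle x,y\rangle_{V}$ for all $x,y\in V$. Writing $\mathbf v=(v_1,\dots,v_k)$, it follows that $(g(v_1),\dots,g(v_k))$ is an ordered orthonormal basis $g(\mathbf v)$ of $W$; and since $g(x)=\sum_i\langle g(x),g(v_i)\rangle_W\,g(v_i)=\sum_i\langle x,v_i\rangle_V\,g(v_i)$, the map $g$ is linear, and (being a bijection) its inverse $g^{-1}$ is a linear isometry as well. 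Thus $f=T_{f(0)}\circ g$. Because $\mu$ is translation invariant and $\mathcal M(W)$ is closed under translations (the latter being built into translation invariance), it suffices to show $g(A)\in\mathcal M(W)$ and $\mu(g(A))=\lambda(A)$ for every $A\in\mathcal M(V)$; the assertion for $f$ then follows by applying $T_{f(0)}$.

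\emph{Step 2 (the pushforward).} Define $\mathcal M^{*}=\{B\subseteq W:g^{-1}(B)\in\mathcal M(V)\}$ and $\mu^{*}(B)=\lambda(g^{-1}(B))$. Since $g$ and $g^{-1}$ are linear homeomorphisms, $\mathcal M^{*}$ is a $\sigma$-algebra and $\mu^{*}$ a measure, and I would check directly that $\mu^{*}$ is Borel (preimages of open sets under $g^{-1}$ are open), complete (from completeness of $\lambda$), regular ($g$ carries open sets to open sets and closed sets to closed sets), and translation invariant (by Proposition~2.1 applied to the linear map $g^{-1}$, together with translation invariance of $\lambda$). Moreover $g([\mathbf v])=[g(\mathbf v)]$ by linearity, so $\mu^{*}([g(\mathbf v)])=\lambda([\mathbf v])=1$; in particular $\mu^{*}$ is non-zero and finite on a non-empty open set. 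Applying the Uniqueness Theorem to the Lebesgue $\mathbf w$-measure $\mu$ and the measure $\mu^{*}$ then yields $\mathcal M^{*}=\mathcal M(W)$ and $\mu^{*}=\kappa\mu$ on $\mathcal M(W)$ for some $\kappa\in(0,\infty)$. In particular, for $A\in\mathcal M(V)$ we have $g^{-1}(g(A))=A\in\mathcal M(V)$, so $g(A)\in\mathcal M^{*}=\mathcal M(W)$, which disposes of the measurability assertion.

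\emph{Step 3 (the constant equals $1$).} It remains to show $\kappa=1$. I would first establish the sub-claim that a $k$-dimensional Lebesgue $\mathbf u$-measure $\nu$ on a $k$-dimensional inner product space $X$ is invariant under every orthogonal self-map $\phi$ of $X$. To see this, push $\nu$ forward along $\phi$ exactly as in Step~2; the result is a non-zero, Borel, regular, complete, translation-invariant measure on $X$ that is finite on a non-empty open set, so by the Uniqueness Theorem it equals $\kappa'\nu$ on $\mathcal M(X)$ for some $\kappa'\in(0,\infty)$. Now evaluate on the closed unit ball $B=\{x\in X:\|x\|_X\le1\}$: since $\phi^{-1}$ preserves norms, $\phi^{-1}(B)=B$, so the pushforward assigns $B$ the value $\nu(B)$, whence $\nu(B)=\kappa'\nu(B)$; and $0<\nu(B)<\infty$, because $B$ lies in a union of $2^{k}$ translates of $[\mathbf u]$ (so $\nu(B)<\infty$) while $\nu(B)=0$ would force $\nu(X)=0$ (countably many translates of $B$ cover $X$), contradicting $\nu([\mathbf u])=1$. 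Hence $\kappa'=1$, i.e.\ $\nu\circ\phi=\nu$ on $\mathcal M(X)$. Applying this with $X=W$, $\nu=\mu$, and $\phi$ the orthogonal self-map of $W$ sending $\mathbf w$ to $g(\mathbf v)$ (so $\phi([\mathbf w])=[g(\mathbf v)]$ by linearity) gives $\mu([g(\mathbf v)])=\mu([\mathbf w])=1$, whence $1=\mu^{*}([g(\mathbf v)])=\kappa\,\mu([g(\mathbf v)])=\kappa$. Therefore $\mu^{*}=\mu$ on $\mathcal M(W)$, and for every $A\in\mathcal M(V)$, $\mu(f(A))=\mu(g(A))=\mu^{*}(g(A))=\lambda(g^{-1}(g(A)))=\lambda(A)$.

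\emph{The main obstacle.} Steps~1 and~2 are routine verifications. The delicate step is Step~3: testing the comparison constant on the \emph{cube} $[g(\mathbf v)]$ only produces $1=\kappa\,\mu([g(\mathbf v)])$, and $\mu([g(\mathbf v)])$ is not \emph{a priori} equal to $1$ — indeed, establishing exactly that kind of basis-independence is the purpose of this theorem. One escapes the apparent circularity by testing the constant instead on a \emph{ball}, which is invariant under every orthogonal map and has finite positive measure, so the constant is pinned to $1$. (This is precisely why Theorem~3.3 is needed, in addition to the Uniqueness Theorem, to conclude that distinct Lebesgue measures on a fixed inner product space coincide.)
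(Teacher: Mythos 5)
Your proof is correct and rests on the same two pillars as the paper's own argument: reduce to a linear isometry by translating away $f(0)$, invoke the Uniqueness Theorem to obtain a proportionality constant between the pushforward measure and the target Lebesgue measure, and pin that constant to $1$ by testing on a ball, which is invariant under origin-fixing isometries and has finite positive measure. The only organizational difference is that the paper factors $f$ through the canonical basis-matching isometry $g:W\rightarrow V$ (whose constant is pinned by the cube itself, since $g([\textbf{w}])=[\textbf{v}]$) and then handles the resulting self-isometry of $V$, whereas you push $\lambda$ forward along $f$ directly and dispose of the skew cube $[g(\textbf{v})]$ via a separate orthogonal-invariance sub-claim --- the same ball trick the paper deploys in its first case with the set $U=\{x\in V:\Vert x\Vert_V<k\}$.
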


A version of this result for isometries of $\mathbb{E}^n$ appears as Corollary 20.2.24 on page 435 of \cite{royden}.  The proof of this theorem has two parts.  First, the statement that an isometry sends elements of $\mathcal{M}(V)$ to elements of $\mathcal{M}(W)$ can be deduced from the facts that isometries satisfy a Lipschitz condition (defined on page 216 of \cite{royden}) and that a function from $V$ to $W$ which satisfies a Lipschitz condition sends elements of $\mathcal{M}(V)$ to elements of $\mathcal{M}(W)$.  (See Proposition 20.2.20 on page 432 of \cite{royden}.)  Second, the assertion that an isometry preserves measure ($\mu(f(A))=\lambda(A)$ for all $A\in\mathcal{M}(V)$) is proved in \cite{royden} in a way that relies on properties of the Lebesgue integral.  We will present a different proof here that avoids invoking the Lebesgue integral and instead depends on the Uniqueness Theorem 3.2.

\begin{proof}[Proof of Theorem 3.3]
First, consider the case in which $W=V$, $\textbf{w}=\textbf{v}$ and $f:V\rightarrow V$ is an isometry such that $f(0_V)=0_V$ where $0_V$ is the additive identity of $V$.
An elementary argument shows that any isometry $f:V\rightarrow W$ which sends $0_V$ to $0_W$ is linear.
(First observe that $||f(x)||_W=||x||_V$ for all $x\in V$.  
Next, show that $\langle f(x),f(y)\rangle_W=\langle x,y\rangle_V$ for all $x,y\in V$ by expanding both sides of the equation $(||f(x)-f(y)||_W)^2=(||x-y||_V)^2$.  
Third, show that $f(ax+by)-af(x)-bf(y)=0$ for all $x,y\in V$ and all $a,b\in\mathbb{R}$ by expanding $(||f(ax+by)-af(x)-bf(y)||_W)^2$.)  
Define the function $\lambda^\prime:\mathcal{M}(V)\rightarrow[0,\infty]$ by $\lambda^\prime(A)=\lambda(f(A))$.
Using that fact that $f$ is a linear function and a homeomorphism, one verifies that $\lambda^\prime$ is a non-zero translation invariant measure.  
(Proposition 2.1 is helpful in proving that $\lambda^\prime$ is translation invariant.)  
Hence, the Uniqueness Theorem provides a $\kappa\in(0,\infty)$ such that $\lambda^\prime=\kappa\lambda$.  
If $U=\{x\in V:||x||_V<k\}$, then $f(U)=U$ because f is an isometry that fixes $0_V$, and $\lambda(U)>0$ because $[\textbf{v}]\subset U$.  
Hence, $\kappa\lambda(U)=\lambda^\prime(U)= \lambda(f(U))=\lambda(U)$.  
Consequently, $\kappa=1$.  
Therefore, $\lambda^\prime=\lambda$.  
We conclude that  $\lambda(f(A))=\lambda(A)$ for all $A\in\mathcal{M}(V)$.

Second, consider the case in which $W=V,\textbf{w}=\textbf{v},\mu=\lambda$ and $f:V\rightarrow V$ is an isometry such that $f(0_V)=p\neq0_V$.  
Let $T_{-p}:V\rightarrow V$ denote the translation $T_{-p}(x)=x-p$.  
Let $A\in\mathcal{M}(V)$.  
Since $\lambda$ is translation invariant, then $\lambda(f(A))=\lambda(T_{-p}(f(A))$ and since $T_{-p}\circ f:V\rightarrow V$ is an isometry such that $T_{-p}\circ f(0_V)=0_V$, then $\lambda(T_{-p}\circ f(A))=\lambda(A)$.  
Hence, $\lambda(f(A))=\lambda(A)$.

Finally, consider the general case in which $W\neq V$.  
Suppose $\textbf{v}=(v_1,v_2,\cdots,v_k)$ and $\textbf{w}=(w_1,w_2,\cdots,w_k)$.  
Then a linear isometry $g:W\rightarrow V$ is determined by the equation $g(\sum_{1\leq i\leq k}t_iw_i)=\sum_{1\leq i\leq k}t_iv_i$.
Observe that $g([\textbf{w}])=[\textbf{v}]$.  
Define the function $\mu^\prime:\mathcal{M}(W)\rightarrow [0,\infty]$ by $\mu^\prime(B)=\lambda(g(B))$.  
As in the first paragraph of this proof, one can verify that $\mu^\prime$ is a non-zero translation invariant measure.  
Hence, the Uniqueness Theorem provides a $\kappa\in(0,\infty)$ such that $\mu^\prime=\kappa\mu$. 
Therefore, $\kappa=\kappa\mu([\textbf{w}])=\mu^\prime([\textbf{w}])=\lambda(g([\textbf{w}]))=\lambda([\textbf{v}])=1$.
Thus, $\mu^\prime=\mu$.  
Consequently, for $A\in\mathcal{M}(V)$, $\mu(f(A))=\mu^\prime(f(A))=\lambda(g\circ f(A))$.  
Since $g\circ f:V\rightarrow V$ is an isometry, then $\lambda(g\circ f(A))=\lambda(A)$. 
We conclude that $\mu(f(A))=\lambda(A)$.
\end{proof}

Theorem 3.3 allows us to settle the issue of the relationship between two $k$-dimensional Lebesgue measures arising from different ordered orthonormal bases for the same inner product space.

\begin{corollary}
If \textbf{u} and \textbf{u}$^\prime$ are ordered orthonormal bases for a $k$-dimensional inner product space $V$, $\mu:\mathcal{M}(V)\rightarrow[0,\infty]$ is a $k$-dimensional Lebesgue \textbf{u}-measure on $V$ and $\mu^\prime:\mathcal{M}(V)\rightarrow[0,\infty]$ is a $k$-dimensional Lebesgue $\textbf{u}^\prime$-measure on V, then $\mu=\mu^\prime$.
\end{corollary}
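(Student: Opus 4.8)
The plan is to deduce this immediately from Theorem 3.3 by taking the isometry to be the identity map. First I would recall the observation made just before the statement of Theorem 3.3: the $\sigma$-algebra that serves as the domain of a $k$-dimensional Lebesgue $\textbf{u}$-measure on $V$ does not depend on the choice of ordered orthonormal basis $\textbf{u}$, and is denoted $\mathcal{M}(V)$. Hence $\mu$ and $\mu^\prime$ have the same domain $\mathcal{M}(V)$, so it makes sense to compare them.

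Next I would invoke Theorem 3.3 with $W=V$, with $\textbf{v}=\textbf{u}$ and $\textbf{w}=\textbf{u}^\prime$, taking the measure called $\lambda$ in that theorem to be our $\mu$ (the $k$-dimensional Lebesgue $\textbf{u}$-measure) and the measure called $\mu$ in that theorem to be our $\mu^\prime$ (the $k$-dimensional Lebesgue $\textbf{u}^\prime$-measure), and taking $f=\mathrm{id}_V:V\rightarrow V$. The identity map is trivially an isometry, since $\rho_V(\mathrm{id}_V(x),\mathrm{id}_V(y))=\rho_V(x,y)$ for all $x,y\in V$. Theorem 3.3 then yields, for every $A\in\mathcal{M}(V)$, that $\mu^\prime(A)=\mu^\prime(\mathrm{id}_V(A))=\mu(A)$. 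Therefore $\mu=\mu^\prime$.

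I do not expect any real obstacle here: the entire content of the corollary is already packaged inside Theorem 3.3, which in turn rests on the Uniqueness Theorem 3.2. The only things that need care are the bookkeeping of which of $\mu,\mu^\prime$ plays the role of which measure in the hypotheses of Theorem 3.3, and the (immediate) remark that $\mathrm{id}_V$ qualifies as an isometry so that the theorem is applicable.
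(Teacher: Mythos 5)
Your proposal is correct and is exactly the paper's argument: apply Theorem 3.3 with $f=\mathrm{id}_V$, which is trivially an isometry, to conclude $\mu^\prime(A)=\mu(A)$ for all $A\in\mathcal{M}(V)$. The role-assignment bookkeeping you spell out is right and matches the paper's (more terse) proof.
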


\begin{proof}
Since the identity function from $V$ to itself is an isometry, then Theorem 3.3 implies $\mu^\prime(A)=\mu(A)$ for every $A\in \mathcal{M}(V)$.
\end{proof}

From Corollary 3.4 we conclude that there is \emph{one and only one} $k$-dimensional Lebesgue \textbf{u}-measure on a $k$-dimensional inner product space $V$, independent of the choice of ordered orthonormal basis \textbf{u}.  We will subsequently call this measure \emph{$k$-dimensional Lebesgue measure on $V$} and denote it by $Vol_k:\mathcal{M}(V)\rightarrow[0,\infty]$.

Theorem 3.3 reveals that isometries between $k$-dimensional inner product spaces preserve Lebesgue measure.  We need one other result that describes how functions between $k$-dimensional inner product spaces affect Lebesgue measure.

\begin{theorem}
Suppose $V$ and $W$ are $k$-dimensional inner product spaces and $Vol_k:\mathcal{M}(V)\rightarrow[0,\infty]$ and $Vol_k:\mathcal{M}(W)\rightarrow[0,\infty]$ are the $k$-dimensional Lebesgue measures on $V$ and $W$, respectively. 
If $L:V\rightarrow W$ is a linear function, then for every $A\in\mathcal{M}(V)$, $L(A)\in\mathcal{M}(W)$.  
Furthermore, if \textbf{v} and \textbf{w} are ordered orthonormal bases for $V$ and $W$, respectively, then for every $A\in\mathcal{M}(V)$,
\[
Vol_k(L(A))=|Det(C_{\textbf{v},\textbf{w}}(L))|\;Vol_k(A).
\]
\end{theorem}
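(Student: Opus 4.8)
The plan is to treat separately the case where $L$ is injective — equivalently, since $\dim V=\dim W=k$, a linear isomorphism — and the case where it is not. Measurability of $L(A)$ will be handled uniformly: a linear map between finite-dimensional inner product spaces satisfies a Lipschitz condition, and Lipschitz maps carry $\mathcal{M}(V)$ into $\mathcal{M}(W)$ by the result from \cite{royden} cited above for Theorem 3.3. One preliminary observation is needed in both cases: every bounded subset of a $(k-1)$-dimensional affine subspace of a $k$-dimensional inner product space has $Vol_k$-measure $0$. I would prove this by identifying the ambient space with $\mathbb{E}^k$ by an isometry (Theorem 3.3), taking the subspace to be $\{x_k=0\}$, and observing that the pairwise disjoint translates $T_{te_k}([0,1]^{k-1}\times\{0\})$ for $t\in[0,1]$ all lie in the unit cube $[\mathbf{e}]$ and all have the same measure, which therefore must be $0$, since a set of finite measure cannot contain uncountably many pairwise disjoint sets of a common positive measure.

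In the non-injective case, $C_{\mathbf{v},\mathbf{w}}(L)$ is singular, so the right-hand side is $0$; and $L(A)\subseteq L(V)$, a linear subspace of dimension $<k$, hence lies in a $(k-1)$-dimensional linear subspace $H$, which is a countable union of bounded pieces and so has $Vol_k(H)=0$ by the preliminary observation. Completeness of $Vol_k$ then gives both $L(A)\in\mathcal{M}(W)$ and $Vol_k(L(A))=0$, establishing the formula.

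In the injective case I would define $\nu:\mathcal{M}(V)\rightarrow[0,\infty]$ by $\nu(A)=Vol_k(L(A))$. Injectivity makes $\nu$ countably additive, so $\nu$ is a measure on the Borel $\sigma$-algebra $\mathcal{M}(V)$; it is translation invariant by Proposition 2.1, since $L\circ T_p=T_{L(p)}\circ L$ and $Vol_k$ is translation invariant; and it is non-zero and finite on the open ball of radius $r$ about $0$, because $L$ is a homeomorphism sending that ball onto an open set and a Lipschitz map sending it onto a bounded set. The Uniqueness Theorem 3.2 then yields a $\kappa\in(0,\infty)$ with $\nu=\kappa\,Vol_k$, and evaluating at $[\mathbf{v}]$ gives $\kappa=Vol_k(L([\mathbf{v}]))$. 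So it remains to prove $Vol_k(L([\mathbf{v}]))=|Det(C_{\mathbf{v},\mathbf{w}}(L))|$.

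This identification is the heart of the matter. Composing $L$ with the evident linear isometries $\mathbb{E}^k\rightarrow V$ and $W\rightarrow\mathbb{E}^k$ that match up the chosen orthonormal bases with the standard basis, and using Theorem 3.3, reduces the claim to: for the linear map $T:\mathbb{E}^k\rightarrow\mathbb{E}^k$ with coordinate matrix $M:=C_{\mathbf{v},\mathbf{w}}(L)$ (Proposition 2.3 identifies this matrix), $Vol_k(T([\mathbf{e}]))=|Det(M)|$. Running the previous paragraph with $V=W=\mathbb{E}^k$ shows that $S\mapsto Vol_k(S([\mathbf{e}]))$ is multiplicative over composition of invertible linear self-maps of $\mathbb{E}^k$, and it equals $1$ on orthogonal maps (each carries $[\mathbf{e}]$ to the cube $[\mathbf{u}]$ of some orthonormal basis $\mathbf{u}$, and $Vol_k([\mathbf{u}])=1$). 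Factoring $M=O_1\Lambda O_2$ by the singular value decomposition, with $O_1,O_2$ orthogonal and $\Lambda=\mathrm{diag}(\lambda_1,\dots,\lambda_k)$ having positive diagonal entries, reduces the claim to $Vol_k(\Lambda([\mathbf{e}]))=\lambda_1\cdots\lambda_k$; and writing $\Lambda$ as a product of one-coordinate dilations and using multiplicativity once more, this reduces to the single assertion that $h(c):=Vol_k([0,1]^{k-1}\times[0,c])$ equals $c$ for every $c\ge0$. That follows from $h(1)=1$, the additivity of $h$ (split the box along the last coordinate; the separating face has measure $0$ by the preliminary observation), which forces $h$ to agree with the identity on the nonnegative rationals, and the monotonicity of $h$, which then forces $h(c)=c$ for all $c\ge0$ by squeezing. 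I expect this concluding chain of reductions — and within it the need to obtain the preliminary measure-zero observation without circularly presupposing a box-volume formula — to be the main obstacle; everything else is a routine application of the Uniqueness Theorem 3.2 and Theorem 3.3.
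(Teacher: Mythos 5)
Your proof is correct, but it is worth noting that the paper does not actually prove Theorem 3.5: for measurability it cites the Lipschitz-condition results in Royden (the same route you take), for the determinant formula it cites Royden's Corollary 20.2.23 (whose proof there goes through the Lebesgue integral), and it only \emph{remarks} that an integral-free proof exists via the Polar Decomposition Theorem, declaring that decomposition ``beyond the scope of this article.'' What you have written is essentially that omitted integral-free proof, carried out in full: the Uniqueness Theorem 3.2 reduces everything to the single constant $\kappa=Vol_k(L([\mathbf{v}]))$, and the singular value decomposition $M=O_1\Lambda O_2$ (equivalent to polar decomposition, so you are importing exactly the linear-algebra fact the author chose not to develop) reduces that constant to the one-variable computation $h(c)=c$, which you obtain from additivity, $h(1)=1$, and monotonicity. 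Your preliminary lemma that hyperplanes are null -- via uncountably many disjoint translates of equal measure inside a set of finite measure -- is also needed and correctly handled, both for the singular case and for the negligibility of the separating face in the additivity of $h$. Two small points you should make explicit if you write this up: the claim that $\nu$ is non-zero requires knowing that non-empty open sets have positive measure, which follows by covering $[\mathbf{v}]$ with finitely many translates of a small cube and using subadditivity and translation invariance; and the countable additivity of $\nu$ presupposes the measurability of images, so the Lipschitz argument must come first. The trade-off between the two approaches is clear: the paper's citation-based treatment is shorter but leans on integration theory the paper otherwise avoids, while yours is self-contained modulo SVD and stays entirely within the measure-theoretic toolkit (Uniqueness Theorem, Theorem 3.3) that the paper has already built.
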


Since linear functions between finite dimensional inner product spaces satisfy a Lipschitz condition (Proposition 20.2.17 on page 430 of \cite{royden}) and since a function which satisfies a Lipschitz condition sends Lebesgue measurable sets to Lebesgue measurable sets (Proposition 20.2.18 on page 431 of \cite{royden}), then every linear function between finite dimensional inner product spaces $V$ and $W$ sends elements of $\mathcal{M}(V)$ to elements of $\mathcal{M}(W)$.  
A statement and proof of the equation $Vol_k(L(A))=|Det(C_{\textbf{v},\textbf{w}}(L))|\;Vol_k(A)$ for linear functions from $\mathbb{E}^n$ to itself appears as Corollary 20.2.23 on page 434 of \cite{royden}.  
(Again the proof of this result in \cite{royden} relies on properties of the Lebesgue integral.  
A simple proof which avoids the Lebesgue integral can be given using a linear algebra theorem known as the \emph{Polar Decomposition Theorem}.  
For linear functions between finite dimensional inner product spaces, this theorem has a proof which, though simple, is beyond the scope of this article.

Suppose $\mathcal{M}$ is a $\sigma$-algebra on a set $X$, $\mathcal{N}$ is a $\sigma$-algebra on a set $Y$ and $f:X\rightarrow Y$ is a function such that $f(A)\in\mathcal{N}$ for all $A\in\mathcal{M}$.  
Then $f$ determines a function from $\mathcal{M}$ to $\mathcal{N}$ which we denote $f_{\ast}:\mathcal{M}\rightarrow\mathcal{N}$ and specify by the equation $f_{\ast}(A)=f(A)$ for all $A\in\mathcal{M}$.

Suppose $(X,\rho)$ and $(Y,\sigma)$ are pseudometric spaces and $f:X\rightarrow Y$ is a function.
$f$ is an \emph{isometry} if $f(X)=Y$ and $\sigma(f(x),f(x^\prime))=\rho(x,x^\prime)$ for all $x,x^\prime\in X$.
$f$ is a \emph{dilation with dilation constant $\delta$} if $\delta\in[0,\infty)$ and $\sigma(f(x),f(x^\prime))=\delta\rho(x,x^\prime)$ for all $x,x^\prime\in X$.  
Note that isometries and dilations are continuous.

Observe that if $f:X\rightarrow Y$ is a function and $A$ and $B$ are subsets of $X$, then $f(A)\Delta f(B)\subset f(A\Delta B)$, with equality holding if $f$ is injective.  Theorems 3.3 and 3.5 together with this observation yield the following conclusions.

\begin{corollary}
Suppose $V$ and $W$ are $k$-dimensional inner product spaces, $Vol_k:\mathcal{M}(V)\rightarrow[0,\infty]$ and $Vol_k:\mathcal{M}(W)\rightarrow[0,\infty]$ are the $k$-dimensional Lebesgue measures on $V$ and $W$, respectively, and $\mathcal{M}(V)$ and $\mathcal{M}(W)$ are assigned the symmetric difference 
pseudometrics associated with these Lebesgue measures.  

\medskip\noindent\textit{i})\;If $f:V\rightarrow W$ is an isometry, then $f_{\ast}:\mathcal{M}(V)\rightarrow \mathcal{M}(W)$ is also an isometry.

\medskip\noindent\textit{ii})\;If $L:V\rightarrow W$ is a linear function and \textbf{v} and \textbf{w} are ordered orthonormal bases for $V$ and $W$, respectively, then 
$L_{\ast}:\mathcal{M}(V)\rightarrow \mathcal{M}(W)$ is a dilation with dilation constant $|Det(C_{\textbf{v},\textbf{w}}(L)|$.  \qed
\end{corollary}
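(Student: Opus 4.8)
The plan is to reduce everything to two results already in hand: an isometry between $k$-dimensional inner product spaces preserves $Vol_k$ (Theorem 3.3), and a linear map $L:V\to W$ multiplies $Vol_k$ by $|Det(C_{\mathbf{v},\mathbf{w}}(L))|$ (Theorem 3.5); these are combined with the set-theoretic remark recorded just before the corollary, namely that $f(A)\Delta f(B)\subseteq f(A\Delta B)$ with equality when $f$ is injective. Throughout, write $\rho_V(A,B)=Vol_k(A\Delta B)$ and $\rho_W(A',B')=Vol_k(A'\Delta B')$ for the two symmetric difference pseudometrics.

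For part i), first note that $f_{\ast}$ is well defined because Theorem 3.3 guarantees $f(A)\in\mathcal{M}(W)$ for $A\in\mathcal{M}(V)$. I would then check surjectivity: an isometry of inner product spaces is a bijection (the distance condition forces injectivity, and the requirement $f(V)=W$ gives surjectivity), its inverse $f^{-1}:W\to V$ is again an isometry, so Theorem 3.3 gives $f^{-1}(B)\in\mathcal{M}(V)$ for every $B\in\mathcal{M}(W)$, whence $f_{\ast}(f^{-1}(B))=B$. Finally, for $A,B\in\mathcal{M}(V)$, injectivity of $f$ yields $f(A)\Delta f(B)=f(A\Delta B)$, so $\rho_W(f_{\ast}(A),f_{\ast}(B))=Vol_k(f(A\Delta B))=Vol_k(A\Delta B)=\rho_V(A,B)$, the middle equality being Theorem 3.3. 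Hence $f_{\ast}$ is an isometry.

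For part ii), put $\delta=|Det(C_{\mathbf{v},\mathbf{w}}(L))|\in[0,\infty)$; again $L_{\ast}$ is well defined, now by Theorem 3.5. I would split on whether $L$ is injective. If it is, then since $\dim V=\dim W=k$ the $k\times k$ matrix $C_{\mathbf{v},\mathbf{w}}(L)$ is invertible, so $\delta>0$, and moreover $L(A)\Delta L(B)=L(A\Delta B)$; thus Theorem 3.5 gives $\rho_W(L_{\ast}(A),L_{\ast}(B))=Vol_k(L(A\Delta B))=\delta\,Vol_k(A\Delta B)=\delta\,\rho_V(A,B)$. If $L$ is not injective, then $Det(C_{\mathbf{v},\mathbf{w}}(L))=0$, so $\delta=0$; moreover $L(A)\Delta L(B)$ lies in the proper linear subspace $L(V)$ of $W$, which is $Vol_k$-null (apply Theorem 3.5 with $A=V$), so $\rho_W(L_{\ast}(A),L_{\ast}(B))=0=\delta\,\rho_V(A,B)$. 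In either case $L_{\ast}$ is a dilation with dilation constant $\delta$.

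The only step that needs any care is the non-injective case of part ii), where one cannot appeal to the set equality $L(A)\Delta L(B)=L(A\Delta B)$; but the difficulty is slight, since in that case $\delta=0$ and $L(A)\Delta L(B)$ is $Vol_k$-null, so the dilation identity holds trivially. Everything else is bookkeeping with Theorems 3.3 and 3.5.
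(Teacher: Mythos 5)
Your proof is correct and follows exactly the route the paper intends: the observation that $f(A)\Delta f(B)\subseteq f(A\Delta B)$, with equality for injective $f$, combined with Theorem 3.3 for part \textit{i}) and Theorem 3.5 for part \textit{ii}); the paper itself states the corollary with no further argument beyond citing these ingredients. Your extra care in the non-injective case of \textit{ii}) is welcome, though applying Theorem 3.5 with $A=V$ literally yields $0\cdot\infty$ --- it is cleaner to write $V$ as a countable union of sets of finite measure (using $\sigma$-finiteness), apply Theorem 3.5 to each piece, and conclude $Vol_k(L(V))=0$ by countable subadditivity.
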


Next we provide a technique for calculating $k$-dimensional Lebesgue measure in a $k$-dimensional inner product space $V$. 
Suppose \textbf{u} = $(u_1,u_2,\cdots,u_k)$ is an ordered orthonormal basis for $V$.  
For real numbers $b_i<b_i^\prime$ for $1\leq i\leq k$, we call any set of the form
\[
B=\Biggl\{\sum_{1\leq i\leq k}t_iu_i:b_i\leq t_i\leq b_i^\prime\text{ for }1\leq i\leq k\Biggr\}
\]
a \emph{\textbf{u}-box} and we define the \emph{\textbf{u}-volume} of $B$ to be
\[
vol_{\textbf{u}}(B)= \prod_{1\leq i\leq k}(b_i^\prime -b_i).
\]
Also let
\[
int(B)=\Biggl\{\sum_{1\leq i\leq k}t_iu_i:b_i<t_i<b_i^\prime\text{ for }1\leq i\leq k\Biggr\}
\]
and note that $int(B)$ in non-empty.
Let $\mathbb{B}(V,\textbf{u})$ denote the set of all countable collections \textbf{u}-boxes in $V$. 
If $\mathcal{B}\in\mathbb{B}(V,\textbf{u})$, we define the \emph{\textbf{u}-volume sum of} $\mathcal{B}$ to be
\[
vol_{\textbf{u}}sum(\mathcal{B})=\sum_{B\in\mathcal{B}}vol_{\textbf{u}}(B).
\]		
We stipulate that $\mathbb{B}(V,\textbf{u})$ contains the empty collection $\emptyset$ and that $vol_{\textbf{u}}sum(\emptyset)=0$. 
Observe that since $\mathcal{M}(V)$ is a Borel $\sigma$-algebra and since all \textbf{u}-boxes are closed subsets of $V$, then all \textbf{u}-boxes are elements of $\mathcal{M}(V)$ and $\bigcup\mathcal{B}\in\mathcal{M}(V)$ for all $\mathcal{B}\in
\mathbb{B}(V,\textbf{u})$.
The connection between countable collections of \textbf{u}-boxes in $V$ and $k$-dimensional Lebesgue measure on $V$ is the following result.

\begin{theorem}
If \textbf{u} is an ordered orthonormal basis for a $k$-dimensional inner product space $V$, then
\begin{center}
    $Vol_k(A)=inf\{vol_{\textbf{u}}sum(\mathcal{B}):\mathcal{B}\in\mathbb{B}(V,\textbf{u})\text{ and }A\subset\bigcup\mathcal{B}\}$.
\end{center}
\end{theorem}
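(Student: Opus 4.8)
The plan is to prove the two inequalities that make up the asserted equality separately, for $A\in\mathcal{M}(V)$ (the implicit hypothesis, since $Vol_k$ has domain $\mathcal{M}(V)$). Both directions rest on a single computation: that $Vol_k(B)=vol_{\textbf{u}}(B)$ for every $\textbf{u}$-box $B=\{\sum_i t_iu_i:b_i\le t_i\le b_i'\}$. I would obtain this by translating a corner of $B$ to $0$ (translation invariance) and noting that the translate equals $L([\textbf{u}])$ for the diagonal linear map $L:V\to V$ with $L(u_i)=(b_i'-b_i)u_i$; since $C_{\textbf{u},\textbf{u}}(L)$ is diagonal, Theorem~3.5 gives $Vol_k(B)=|Det(C_{\textbf{u},\textbf{u}}(L))|\,Vol_k([\textbf{u}])=\prod_i(b_i'-b_i)$. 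At the same time I would record the companion fact that the topological boundary of a $\textbf{u}$-box is $Vol_k$-null: each of its faces lies inside $\textbf{u}$-boxes whose $\textbf{u}$-volume tends to $0$, hence has measure $0$ by monotonicity together with the box computation, and $\partial B$ is a finite union of faces; consequently $Vol_k(int(B))=Vol_k(B)=vol_{\textbf{u}}(B)$.

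The easy inequality $Vol_k(A)\le\inf\{vol_{\textbf{u}}sum(\mathcal{B}):\mathcal{B}\in\mathbb{B}(V,\textbf{u}),\ A\subset\bigcup\mathcal{B}\}$ is then immediate: for any such $\mathcal{B}$, the set $\bigcup\mathcal{B}$ is measurable, so monotonicity and countable subadditivity of $Vol_k$, together with the box computation, give $Vol_k(A)\le Vol_k(\bigcup\mathcal{B})\le\sum_{B\in\mathcal{B}}Vol_k(B)=\sum_{B\in\mathcal{B}}vol_{\textbf{u}}(B)=vol_{\textbf{u}}sum(\mathcal{B})$, and I would take the infimum over $\mathcal{B}$.

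For the reverse inequality I would use regularity. Assuming $Vol_k(A)<\infty$ (otherwise there is nothing to prove) and fixing $\epsilon>0$, regularity of $Vol_k$ supplies an open set $U$ with $A\subset U$ and $Vol_k(U)\le Vol_k(A)+\epsilon$. Then I would invoke the standard dyadic decomposition of $U$: with the dyadic $\textbf{u}$-boxes of generation $n$ being $\{\sum_i t_iu_i:m_i2^{-n}\le t_i\le(m_i+1)2^{-n}\}$ for $\textbf{m}\in\mathbb{Z}^k$, let $\mathcal{D}$ consist of the maximal dyadic $\textbf{u}$-boxes of generation $\ge0$ contained in $U$ (those $D\subset U$ of generation $0$, together with those $D\subset U$ of generation $\ge1$ whose parent is not contained in $U$); as usual $\mathcal{D}$ is countable, $\bigcup\mathcal{D}=U$, and the members of $\mathcal{D}$ have pairwise disjoint interiors. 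Since $\bigsqcup_{D\in\mathcal{D}}int(D)\subset U=\bigcup\mathcal{D}$ and $Vol_k(int(D))=Vol_k(D)=vol_{\textbf{u}}(D)$, countable additivity gives $vol_{\textbf{u}}sum(\mathcal{D})=\sum_{D\in\mathcal{D}}Vol_k(int(D))=Vol_k\bigl(\bigsqcup_{D\in\mathcal{D}}int(D)\bigr)\le Vol_k(U)$, while countable subadditivity gives $Vol_k(U)\le\sum_{D\in\mathcal{D}}Vol_k(D)=vol_{\textbf{u}}sum(\mathcal{D})$; hence $vol_{\textbf{u}}sum(\mathcal{D})=Vol_k(U)\le Vol_k(A)+\epsilon$. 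Since $\mathcal{D}\in\mathbb{B}(V,\textbf{u})$ and $A\subset U=\bigcup\mathcal{D}$, this bounds the infimum by $Vol_k(A)+\epsilon$, and letting $\epsilon\to0$ finishes the proof.

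I expect the main obstacle to be precisely that middle step: manufacturing, out of the abstractly characterized measure $Vol_k$, a countable $\textbf{u}$-box cover of an open set whose total $\textbf{u}$-volume equals the measure of that set. The dyadic decomposition is the standard device for this, and the small lemma that $\textbf{u}$-box boundaries are null is what lets the almost-disjoint pieces add up exactly. (If one prefers to avoid the dyadic decomposition altogether, there is a shortcut: the linear isometry $g:\mathbb{E}^k\to V$ with $g(e_i)=u_i$ carries standard boxes bijectively onto $\textbf{u}$-boxes while preserving volume and, by Theorem~3.3, preserves Lebesgue measure, so Theorem~3.7 can be pulled back from the classical box-cover description of Lebesgue measure on $\mathbb{E}^k$.)
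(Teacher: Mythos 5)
Your proof is correct, but it takes a genuinely different route from the paper's. The paper does not argue the two inequalities at all: it observes that running the Carath\'eodory construction on the outer measure $\mu^\ast(A)=\inf\{vol_{\textbf{u}}sum(\mathcal{B}):\mathcal{B}\in\mathbb{B}(V,\textbf{u}),\ A\subset\bigcup\mathcal{B}\}$ produces a complete, Borel, regular, translation-invariant measure with $\mu^\ast([\textbf{u}])=1$ --- i.e.\ a $k$-dimensional Lebesgue $\textbf{u}$-measure whose values are \emph{by definition} the infimum in question --- and then invokes the Uniqueness Theorem 3.2 and Corollary 3.4 to force $\mathcal{M}(V,\textbf{u})=\mathcal{M}(V)$ and $\mu^\ast|\mathcal{M}(V,\textbf{u})=Vol_k$. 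You instead derive the formula directly from the axiomatic properties of $Vol_k$: subadditivity plus the box computation $Vol_k(B)=vol_{\textbf{u}}(B)$ for one inequality, and regularity plus a dyadic decomposition of an open superset into almost-disjoint $\textbf{u}$-boxes (with the observation that box boundaries are null, so the pieces add up exactly) for the other. The paper's route is economical because the construction is already needed for the Existence Theorem 3.1 and all the remaining work is absorbed into the (nontrivial) Uniqueness Theorem; yours avoids reopening the construction and invoking uniqueness, and in the process you essentially reprove Lemmas 3.8 and 3.9, which the paper only states later with a citation to Tao --- so your dyadic work is not wasted in the context of the paper. One small remark: your box computation leans on Theorem 3.5, which the paper states without proof by reference to Royden; if you wanted to insulate yourself from any worry that the classical proof of that change-of-variables formula itself rests on the box description of Lebesgue measure, note that the only case you need is the diagonal map $L(u_i)=(b_i'-b_i)u_i$, which can be handled by hand (subdivide $[\textbf{u}]$ into congruent translates for rational side lengths, then approximate), so there is no genuine circularity.
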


The usual approach to constructing Lebesgue measure on a finite dimensional inner product space $V$ is to define a function $\mu^\ast:\mathcal{P}(V)\rightarrow[0,\infty]$ by the formula
\begin{center}
$\mu^\ast(A)=inf\{vol_{\textbf{u}}sum(\mathcal{B}):\mathcal{B}\in\mathbb{B}(V,\textbf{u})\text{ and }A\subset\bigcup\mathcal{B}\}$.
\end{center}
Here $\mathcal{P}(V)=\{A:A\subset V\}$ is the \emph{power set} of $V$.
The function $\mu^\ast$ is called \emph{Lebesgue \textbf{u}-outer measure on $V$}.  
Then a subcollection $\mathcal{M}(V,\textbf{u})$ of $\mathcal{P}(V)$ is specified by the formula 
\[
\mathcal{M}(V,\textbf{u})=\{A\in\mathcal{P}(V):\mu^\ast(B\cap A)+\mu^\ast(B-A)=\mu^\ast(B)\text{ for all }B\in\mathcal{P}(V)\}. 
\]
The elements of $\mathcal{M}(V,\textbf{u})$ are called \emph{Lebesgue \textbf{u}-measurable subsets of $V$}. 
Next Carath\'{e}odory’s Theorem (Theorem 17.4.8 on page 34 of \cite{royden}) is invoked to reveal that $\mathcal{M}(V,\textbf{u})$ is a $\sigma$-algebra on $V$ and the restriction 
$\mu^\ast|\mathcal{M}(V,\textbf{u})$ is a complete measure on $V$.
Finally, one shows that $\mu^\ast|\mathcal{M}(V,\textbf{u})$ is Borel, regular, translation invariant  $\mu^\ast([\textbf{u}])=1$; in other words, $\mu^\ast|\mathcal{M}(V,\textbf{u})$ is a $k$-dimensional Lebesgue \textbf{u}-measure on $V$.  
(See Theorems 20.2.11 and 20.2.13 and Problem 20.2.20 on pages 426-427, 427-428 and 435, respectively, of \cite{royden}.)  
Then the Uniqueness Theorem 3.2 and Corollary 3.4 imply $\mathcal{M}(V,\textbf{u})=\mathcal{M}(V)$ and $\mu^\ast|\mathcal{M}(V,\textbf{u})=Vol_k$, thereby yielding Theorem 3.7.  
This approach to the construction of Lebesgue measure was pioneered in \cite{caratheodory}.  In addition to \cite{royden}, we mention \cite{halmos} as a historically influential comprehensive source for information about measure theory.

We again consider a $k$-dimensional inner product space $V$ and let \textbf{u} be an ordered orthonormal basis for $V$.  
We introduce a special subcollection of $\mathbb{B}(V,\textbf{u})$ that is quite useful in both the development of Lebesgue measure and in the remainder of this paper.  
A countable collection $\mathcal{B}$ of \textbf{u}-boxes in $V$ is \emph{almost disjoint} if $\{int(B):B\in\mathcal{B}\}$ is
pairwise disjoint.  
Let 
\[
\mathbb{B}_0(V,\textbf{u})=\{\mathcal{B}\in\mathbb{B}(V,\textbf{u}):\mathcal{B}\text{ is almost disjoint}\}.
\]
The following lemmas state two basic properties of almost disjoint collections of \textbf{u}-boxes.

\begin{lemma}
Suppose $V$ is a $k$-dimensional inner product space and \textbf{u} is an ordered orthonormal basis for $V$.  
If $\mathcal{B}\in \mathbb{B}_0(V,\textbf{u})$, then $Vol_k(\bigcup\mathcal{B})=vol_\textbf{u}sum(\mathcal{B})=\sum_{B\in\mathcal{B}}Vol_k(B)$.
\end{lemma}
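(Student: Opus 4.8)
The plan is to reduce the lemma entirely to the measure axioms of Section 3 and to Theorem 3.5, in three stages: first compute $Vol_k$ on a single \textbf{u}-box, then show the boundary of a \textbf{u}-box is $Vol_k$-null (so that almost disjoint \textbf{u}-boxes overlap in a null set), and finally disjointify $\bigcup\mathcal{B}$ and invoke countable additivity.

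\emph{Stage 1.} I would first show that $Vol_k(B)=vol_{\textbf{u}}(B)$ for each \textbf{u}-box $B=\{\sum_{1\le i\le k}t_iu_i:b_i\le t_i\le b_i^\prime\}$. Set $\ell_i=b_i^\prime-b_i>0$ and $p=\sum_{1\le i\le k}b_iu_i$, and let $L:V\rightarrow V$ be the linear map with $L(u_i)=\ell_iu_i$. Then $B=T_p(L([\textbf{u}]))$, and $C_{\textbf{u},\textbf{u}}(L)$ is the diagonal matrix with diagonal entries $\ell_1,\dots,\ell_k$, so Theorem 3.5 gives $Vol_k(L([\textbf{u}]))=|Det(C_{\textbf{u},\textbf{u}}(L))|\,Vol_k([\textbf{u}])=\prod_{1\le i\le k}\ell_i$, and translation invariance of $Vol_k$ yields $Vol_k(B)=\prod_{1\le i\le k}\ell_i=vol_{\textbf{u}}(B)$. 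In particular every \textbf{u}-box has finite $Vol_k$-measure, and summing this identity over $B\in\mathcal{B}$ already produces the second claimed equality $vol_{\textbf{u}}sum(\mathcal{B})=\sum_{B\in\mathcal{B}}Vol_k(B)$. It remains to prove $Vol_k(\bigcup\mathcal{B})=vol_{\textbf{u}}sum(\mathcal{B})$.

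\emph{Stage 2.} Next I would show $Vol_k(B-int(B))=0$ for each \textbf{u}-box $B$. For integers $n\ge 3$ the sets $B_n=\{\sum_{1\le i\le k}t_iu_i:b_i+\ell_i/n\le t_i\le b_i^\prime-\ell_i/n\}$ are \textbf{u}-boxes forming an increasing sequence with $\bigcup_{n\ge 3}B_n=int(B)$ and $Vol_k(B_n)=(1-2/n)^k\prod_{1\le i\le k}\ell_i$; hence property (v) of measures gives $Vol_k(int(B))=\sup_{n\ge 3}Vol_k(B_n)=\prod_{1\le i\le k}\ell_i=Vol_k(B)$, and finiteness forces $Vol_k(B-int(B))=0$. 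Consequently, if $B^\prime$ and $B^{\prime\prime}$ are \textbf{u}-boxes with $int(B^\prime)\cap int(B^{\prime\prime})=\emptyset$, then any point of $B^\prime\cap B^{\prime\prime}$ lies either outside $int(B^\prime)$, or else in $int(B^\prime)$ and therefore outside $int(B^{\prime\prime})$; thus $B^\prime\cap B^{\prime\prime}\subset(B^\prime-int(B^\prime))\cup(B^{\prime\prime}-int(B^{\prime\prime}))$, a union of two $Vol_k$-null sets, so $Vol_k(B^\prime\cap B^{\prime\prime})=0$.

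\emph{Stage 3.} If $\mathcal{B}$ is empty the claim is the stipulation $vol_{\textbf{u}}sum(\emptyset)=0$; otherwise enumerate $\mathcal{B}=\{B_1,B_2,\dots\}$ (finite enumerations only simplify the argument) and put $E_i=B_i-(B_1\cup\cdots\cup B_{i-1})$. The $E_i$ are pairwise disjoint elements of $\mathcal{M}(V)$ whose union is $\bigcup\mathcal{B}$. By Stage 2 and subadditivity, $Vol_k\bigl(B_i\cap(B_1\cup\cdots\cup B_{i-1})\bigr)\le\sum_{1\le j<i}Vol_k(B_i\cap B_j)=0$, so from the disjoint decomposition $B_i=E_i\cup\bigl(B_i\cap(B_1\cup\cdots\cup B_{i-1})\bigr)$ into measurable sets, the latter of finite measure $0$, we obtain $Vol_k(E_i)=Vol_k(B_i)=vol_{\textbf{u}}(B_i)$. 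Countable additivity then gives $Vol_k(\bigcup\mathcal{B})=\sum_i Vol_k(E_i)=\sum_i vol_{\textbf{u}}(B_i)=vol_{\textbf{u}}sum(\mathcal{B})$. The only step carrying any subtlety is Stage 2 — converting disjoint interiors into a $Vol_k$-null overlap — but handling it through the shrinking-box family and property (v), rather than dissecting $\partial B$ into lower-dimensional faces, keeps even that routine; the rest is bookkeeping with the measure axioms and Theorem 3.5.
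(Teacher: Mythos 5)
Your proof is correct. It is worth noting that the paper does not actually prove Lemma 3.8 at all: it imports it (together with Lemma 3.9) from Tao's \emph{An Introduction to Measure Theory}, where the corresponding statement (Lemma 1.2.9 there) is established by outer-measure and covering arguments tied to the explicit construction of Lebesgue measure. You instead derive the lemma entirely from the paper's axiomatic apparatus --- the measure properties (i)--(vi) of Section 3, translation invariance, the normalization $Vol_k([\textbf{u}])=1$, and the linear change-of-variables formula of Theorem 3.5 --- and all three stages check out: the diagonal map in Stage 1 correctly produces $Vol_k(B)=\prod_i(b_i'-b_i)$; the shrinking-box family in Stage 2 is a clean way to see that $Vol_k(int(B))=Vol_k(B)$ without dissecting the boundary into faces, and the inclusion $B'\cap B''\subset(B'-int(B'))\cup(B''-int(B''))$ is exactly right; and the disjointification in Stage 3 is standard bookkeeping. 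What your route buys is self-containment within the paper's own framework, at the cost of leaning on Theorem 3.5, which the paper itself only cites (via Royden) rather than proves; since Theorem 3.5 and Lemma 3.8 are both treated as imported black boxes in the paper, deriving one from the other is logically unobjectionable here, but a reader should be aware that in a ground-up development of Lebesgue measure the dependence usually runs the other way, with box-volume additivity proved first and the change-of-variables formula built on top of it.
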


\begin{lemma}
Suppose $V$ is a $k$-dimensional inner product space and \textbf{u} is an ordered orthonormal basis for $V$.
For every open subset $U$ of $V$, there is a $\mathcal{B}\in \mathbb{B}_0(V,\textbf{u})$ such that $\bigcup\mathcal{B}=U$.
\end{lemma}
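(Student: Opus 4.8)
The plan is to run the classical dyadic decomposition in the orthonormal coordinates furnished by $\textbf{u} = (u_1, u_2, \ldots, u_k)$. For an integer $m \ge 0$ and integers $n_1, \ldots, n_k$, call $Q = \{\sum_{1 \le i \le k} t_i u_i : n_i 2^{-m} \le t_i \le (n_i + 1)2^{-m}\}$ a \emph{dyadic $\textbf{u}$-box of generation $m$}; every such $Q$ is a $\textbf{u}$-box in the sense of the preceding discussion, and for each fixed $m$ there are only countably many of them. I would then select boxes greedily by generation: let $\mathcal{B}_0$ consist of the generation-$0$ dyadic $\textbf{u}$-boxes contained in $U$, and for $m \ge 1$ let $\mathcal{B}_m$ consist of the generation-$m$ dyadic $\textbf{u}$-boxes that are contained in $U$ but are contained in no element of $\mathcal{B}_0 \cup \cdots \cup \mathcal{B}_{m-1}$; finally put $\mathcal{B} = \bigcup_{m \ge 0} \mathcal{B}_m$. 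A countable union of countable collections is countable, so $\mathcal{B} \in \mathbb{B}(V, \textbf{u})$, and $\bigcup \mathcal{B} \subset U$ holds by construction.

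Two facts about dyadic $\textbf{u}$-boxes drive the rest, both proved by comparing dyadic intervals coordinatewise. First, the \emph{dyadic dichotomy}: if $Q$ and $Q'$ are dyadic $\textbf{u}$-boxes of generations $m \le m'$, then either $Q' \subset Q$ or $int(Q) \cap int(Q') = \emptyset$. Second, \emph{ancestor uniqueness}: for each $j \le m'$, a given dyadic $\textbf{u}$-box of generation $m'$ is contained in exactly one dyadic $\textbf{u}$-box of generation $j$, which I will call its generation-$j$ ancestor. Using the dichotomy I would check that $\mathcal{B}$ is almost disjoint: given distinct $Q \in \mathcal{B}_m$ and $Q' \in \mathcal{B}_{m'}$ with $m \le m'$, if $int(Q) \cap int(Q') \ne \emptyset$ then $Q' \subset Q$; but $m = m'$ forces $Q = Q'$, while $m < m'$ contradicts the defining property of $\mathcal{B}_{m'}$. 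Hence $\{int(B) : B \in \mathcal{B}\}$ is pairwise disjoint, so $\mathcal{B} \in \mathbb{B}_0(V, \textbf{u})$.

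It then remains to show $U \subset \bigcup \mathcal{B}$. Fix $x \in U$, with $\textbf{u}$-coordinates $(x_1, \ldots, x_k)$, and choose $\epsilon > 0$ with $\mathcal{N}_{\rho_V}(x, \epsilon) \subset U$. Pick $m$ so large that $\sqrt{k}\,2^{-m} < \epsilon$, and let $Q^{(m)}$ be the generation-$m$ dyadic $\textbf{u}$-box whose $i$th coordinate interval is $[\lfloor 2^m x_i \rfloor 2^{-m}, (\lfloor 2^m x_i \rfloor + 1)2^{-m}]$; then $x \in Q^{(m)}$, and since $\textbf{u}$ is orthonormal any two points of $Q^{(m)}$ are at $\rho_V$-distance at most $\sqrt{k}\,2^{-m} < \epsilon$, so $Q^{(m)} \subset \mathcal{N}_{\rho_V}(x,\epsilon) \subset U$. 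Let $Q^{(j)}$ ($0 \le j \le m$) be the generation-$j$ ancestor of $Q^{(m)}$; these form a decreasing chain $Q^{(0)} \supset Q^{(1)} \supset \cdots \supset Q^{(m)}$, each member containing $x$. Let $\ell$ be the least $j$ with $Q^{(j)} \subset U$ (well defined since $Q^{(m)} \subset U$). If $\ell = 0$ then $Q^{(0)} \in \mathcal{B}_0$; if $\ell \ge 1$, then any dyadic $\textbf{u}$-box of a generation $j < \ell$ that contains $Q^{(\ell)}$ must, by ancestor uniqueness, be $Q^{(j)}$, which is not contained in $U$ by minimality of $\ell$ — so $Q^{(\ell)}$ lies in no element of $\mathcal{B}_0 \cup \cdots \cup \mathcal{B}_{\ell - 1}$ and therefore $Q^{(\ell)} \in \mathcal{B}_\ell$. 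Either way $x \in Q^{(m)} \subset Q^{(\ell)} \subset \bigcup \mathcal{B}$, as needed.

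I expect the main obstacle to be keeping the greedy construction honest at both of its uses of the dyadic grid simultaneously: the invariant ``no selected box contains another selected box,'' which yields almost disjointness, must coexist with the ancestor-uniqueness argument certifying that the minimal-generation ancestor of a point $x$ which still lies inside $U$ is in fact the box the greedy rule selects. The supporting facts — that dyadic $\textbf{u}$-boxes are $\textbf{u}$-boxes, that there are countably many per generation, that the set $int(B)$ of a $\textbf{u}$-box as defined above coincides with its topological interior in $V$ (the $\textbf{u}$-coordinate map being a homeomorphism), and the coordinatewise dyadic-interval dichotomy and ancestor uniqueness — are routine and I would dispatch them quickly.
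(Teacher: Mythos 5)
Your argument is correct: the greedy dyadic decomposition, the interior-disjointness dichotomy, and the minimal-generation ancestor argument together give exactly the claimed almost disjoint cover. The paper does not prove this lemma itself but defers to Lemma 1.2.11 of the cited reference \cite{tao}, whose proof is this same dyadic-cube construction, so your approach matches the intended one.
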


These results appear in \cite{tao} as Lemmas 1.2.9 and 1.2.11 on pages 23-25.

The next result hints at the value of the concept of almost disjoint collections of boxes.  
This theorem plays a key role in our proof of the Product Formula for Volume.

\begin{theorem}
Suppose $V$ is a $k$-dimensional inner product space and \textbf{u} is an ordered orthonormal basis for $V$.
Let $\rho$ be the symmetric difference pseudometric on $\mathcal{M}(V)$ associated with $k$-dimensional Lebesgue measure $Vol_k$. 
Then $\{\bigcup\mathcal{B}:\mathcal{B}\in\mathbb{B}_0(V,\textbf{u})\}$ is a dense subset of $\mathcal{M}(V)$ in the topology on $\mathcal{M}(V)$ determined by $\rho$.
\end{theorem}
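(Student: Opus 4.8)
The plan is to approximate an arbitrary $A\in\mathcal{M}(V)$ in the $\rho$-pseudometric by an open set $U\supseteq A$ with $Vol_k(U\setminus A)$ small, and then invoke Lemma 3.9 to write $U=\bigcup\mathcal{B}$ for some $\mathcal{B}\in\mathbb{B}_0(V,\textbf{u})$. Since $A\subseteq U$, the symmetric difference $A\Delta U$ equals $U\setminus A$, so $\rho(A,\bigcup\mathcal{B})=Vol_k(U\setminus A)$ will be small, which is exactly what density requires. The only genuine subtlety is that when $Vol_k(A)=\infty$, outer regularity of $Vol_k$ does not by itself produce such a $U$: an open superset of $A$ can spill arbitrarily far outside $A$. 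To handle this I would exploit the $\sigma$-finite structure of $V$.

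Fix $\epsilon>0$. Let $P$ be the integer lattice of $V$ with respect to $\textbf{u}$ (as in the $\sigma$-finiteness discussion preceding the Existence Theorem), enumerate $P=\{p_1,p_2,\dots\}$, and set $A_n=A\cap T_{p_n}([\textbf{u}])$. Each $A_n$ is measurable with $Vol_k(A_n)\le Vol_k([\textbf{u}])=1<\infty$, and $\bigcup_n A_n=A$ because the translates $T_{p_n}([\textbf{u}])$ cover $V$. Now apply outer regularity of $Vol_k$ to each $A_n$: choose an open $U_n\supseteq A_n$ with $Vol_k(U_n)<Vol_k(A_n)+\epsilon/2^{n+1}$; since $Vol_k(A_n)$ is finite, subtracting gives $Vol_k(U_n\setminus A_n)<\epsilon/2^{n+1}$.

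Put $U=\bigcup_n U_n$, which is open and contains $A$. Any point of $U\setminus A$ lies in some $U_n$ and outside $A\supseteq A_n$, hence in $U_n\setminus A_n$; so $U\setminus A\subseteq\bigcup_n(U_n\setminus A_n)$, and countable subadditivity gives $Vol_k(U\setminus A)\le\sum_n\epsilon/2^{n+1}=\epsilon/2$. Because $A\subseteq U$, we have $A\Delta U=U\setminus A$, so $\rho(A,U)=Vol_k(A\Delta U)<\epsilon$. Finally, Lemma 3.9 provides $\mathcal{B}\in\mathbb{B}_0(V,\textbf{u})$ with $\bigcup\mathcal{B}=U$, whence $\rho(A,\bigcup\mathcal{B})<\epsilon$; since $A$ and $\epsilon$ were arbitrary, $\{\bigcup\mathcal{B}:\mathcal{B}\in\mathbb{B}_0(V,\textbf{u})\}$ is $\rho$-dense in $\mathcal{M}(V)$.

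The step I expect to demand the most care is the infinite-measure case: one cannot in a single stroke engulf $A$ in an economical open set, so the remedy is to localize to the finite-measure pieces $A_n$, approximate each with error $\epsilon/2^{n+1}$, take the union of the resulting open sets, and observe that because $A\subseteq U$ the two-sided symmetric difference collapses to the one-sided difference $U\setminus A$, whose measure is then controlled by the geometric sum of the local errors. The remaining ingredients — that the $A_n$ exhaust $A$, that a countable union of open sets is open, and that Lemma 3.9 converts the open set into an almost disjoint collection of $\textbf{u}$-boxes — are routine.
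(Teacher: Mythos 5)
Your proof is correct and follows essentially the same route as the paper's: decompose $A$ into countably many finite-measure pieces via $\sigma$-finiteness, apply outer regularity to each piece with error $\epsilon/2^{n+1}$, take the union of the resulting open sets, bound $Vol_k(U\setminus A)$ by the sum of the local errors, and finish with Lemma 3.9. The only difference is that you make the $\sigma$-finite decomposition explicit using the lattice translates of $[\textbf{u}]$, where the paper simply invokes $\sigma$-finiteness.
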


\begin{proof}
Lemma 3.9 tells us that it suffices to prove that the set of all open subsets of $V$ is dense in $\mathcal{M}(V)$. 
To this end, let $A\in\mathcal{M}(V)$ and let $\epsilon>0$.
Since $Vol_k$ is $\sigma$-finite, there is a countable subset
$\{A_i:i\geq1\}$ of $\mathcal{M}(V)$ such that $A=\bigcup_{i\geq1}A_i$ and $Vol_k(A_i)<\infty$ for each $i\geq1$.
Since $Vol_k$ is regular, then for each $i\geq1$, there is an open subset $U_i$ of $V$ such that $A_i\subset U_i$ and $Vol_k(U_i)<Vol_k(A_i)+\epsilon/2^i$. 
Hence, for each $i\geq1$, $Vol_k(U_i-A_i)=Vol_k(U_i)-Vol_k(A_i)<\epsilon/2^i$.
Let $U=\bigcup_{i\geq1}U_i$.
Then $U$ is an open subset of $V$ such that $A\subset U$.
Furthermore, $U-A=\bigcup_{i\geq1}(U_i-A)\subset\bigcup_{i\geq1}(U_i-A_i)$.  
Hence,

\medskip\hspace{24mm}$\rho(A,U)=Vol_k(U-A)\leq\sum_{i\geq1}Vol_k(U_i-A_i)<\epsilon$.
\end{proof}

Suppose $V_0$ is a $k$-dimensional vector subspace of $\mathbb{E}^n$.  
Clearly, the dot product on $\mathbb{E}^n$ restricts to an inner product on $V_0$.  
Hence, there is an associated unique Borel $\sigma$-algebra $\mathcal{M}(V_0)$ of Lebesgue measurable subsets of $V_0$ and there is a unique $k$-dimensional Lebesgue measure $Vol_k:\mathcal{M}(V_0)\rightarrow[0,\infty]$ on $V_0$.  
Now consider a $k$-dimensional affine subspace $V$ of $\mathbb{E}^n$ such that $V=T_p(V_0)$ where $p\in\mathbb{E}^n$.
If we restrict the Euclidean metric on $\mathbb{E}^n$ (i.e., the metric on $\mathbb{E}^n$ associated with the dot product) to $V$, then $T_p|V_0:V_0\rightarrow V$ becomes an isometry.
Hence, we feel justified in calling the set $\{T_p(A):A\in\mathcal{M}(V_0)\}$ the \emph{$\sigma$-algebra of Lebesgue measurable subsets of $V$}, denoting this set by $\mathcal{M}(V)$, and in defining \emph{$k$-dimensional Lebesgue measure} $Vol_k:\mathcal{M}(V)\rightarrow[0,\infty]$ on $V$ by $Vol_k(T_p(A))=Vol_k(A)$ for $A\in\mathcal{M}(V_0)$.  
In further support of these choices, we observe that the definitions of $\mathcal{M}(V)$ and $Vol_k:\mathcal{M}(V)\rightarrow[0,\infty]$ are independent of the choice of the point $p\in\mathbb{E}^n$ such that $T_p(V_0)=V$.  
For suppose $p$ and $q\in\mathbb{E}^n$ such that $T_p(V_0)=T_q(V_0)=V$.  
Then $q-p\in V_0$.  
Therefore, $T_{q-p}$ restricts to a translation of $V_0$ which sends $\mathcal{M}(V)$ onto itself.  
Hence, $\{T_q(A):A\in\mathcal{M}(V_0)\}=\{T_p\circ T_{q-p}(A):A\in\mathcal{M}(V_0)\}=\{T_p(A):A\in\mathcal{M}(V_0)\}$.  
It follows that the definition of $\mathcal{M}(V)$ is independent of the choice of $p$.  
Also if $A,B\in\mathcal{M}(V_0)$ such that $T_p(A)=T_q(B)$, then $T_{q-p}(B)=A$.  
The translation invariance of $Vol_k:\mathcal{M}(V_0)\rightarrow[0,\infty]$ then implies $Vol_k(B)=Vol_k(A)$.  
Consequently, the definition of $Vol_k:\mathcal{M}(V)\rightarrow[0,\infty]$ is independent of the choice of $p$.

We continue to extend our terminology concerning Lebesgue measure to affine subspaces of $\mathbb{E}^n$.  
Again, suppose $V$ is a $k$-dimensional affine subspace of $\mathbb{E}^n$.  
Say $V=T_p(V_0)$ where $V_0$ is a $k$-dimensional vector subspace of $\mathbb{E}^n$ and $p\in\mathbb{E}^n$.  
Echoing our previous definitions, if \textbf{u} is an ordered orthonormal basis for $V_0$, we call a subset of $V$ a \emph{\textbf{u}-box in $V$} if it is of the form $T_p(B)$ where $B$ is a \textbf{u}-box in $V_0$.  
Let $\mathbb{B}(V,\textbf{u})$ denote the set of all countable collections of \textbf{u}-boxes in $V$.  
As above, we say that a collection $\mathcal{B}\in\mathbb{B}(V,\textbf{u})$ is \emph{almost disjoint} if $\{int_V(B):B\in\mathcal{B}\}$ is pairwise disjoint (where $int_V$ denotes the interior of a subset of $V$ in the subspace topology on $V$ that it inherits as a subset of $\mathbb{E}^n$).  
We then let $\mathbb{B}_0(V,\textbf{u})=\{\mathcal{B}\in\mathbb{B}(V,\textbf{u}):\mathcal{B}\text{ is almost disjoint}\}$. 
It follows that the properties of Lebesgue measure asserted for $V_0$ by Corollary 3.6, Lemmas 3.8 and 3.9 and Theorem 3.10 effortlessly transfer to properties of $V$.  
In particular, in one of the subsequent arguments, we will need:

\begin{corollary3.10prime}
Suppose $V$ is a $k$-dimensional affine subspace of  $\mathbb{E}^n$, $V=T_p(V_0)$ where $V_0$ is a $k$-dimensional vector subspace of $\mathbb{E}^n$ and $p\in\mathbb{E}^n$, and \textbf{u} is an ordered orthonormal basis for $V_0$.  
Then $\{\bigcup\mathcal{B}:\mathcal{B}\in\mathbb{B}_0(V,\textbf{u})\}$ is a dense subset of $\mathcal{M}(V)$ in the topology on $\mathcal{M}(V)$ determined by the symmetric difference pseudometric associated with $Vol_k$.  \qed
\end{corollary3.10prime}

We close this section with a result that will be used later.

\begin{proposition}
Suppose $V$ is a $k$-dimensional affine subspace of $\mathbb{E}^n$, $W$ is a $k$-dimensional inner product space and $L:\mathbb{E}^n\rightarrow W$ is a linear function.
Let $V_0$ be the $k$-dimensional vector subspace of $\mathbb{E}^n$ such that $V=T_p(V_0)$ where $p\in\mathbb{E}^n$, let \textbf{u} be an ordered orthonormal basis for $V_0$, and let $Vol_k:\mathcal{M}(W)\rightarrow[0,\infty]$ denote $k$-dimensional Lebesgue measure on $W$.
Then for every $\mathcal{B}\in\mathbb{B}_0(V,\textbf{u})$, $Vol_k(L(\bigcup\mathcal{B})=\sum_{B\in\mathcal{B}}Vol_k(L(B))$.
\end{proposition}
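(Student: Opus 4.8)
The plan is to transport the problem from the affine subspace $V$ down to the vector subspace $V_0$ via the translation $T_{-p}$, apply Theorem 3.5 to the linear map $L|V_0:V_0\rightarrow W$, and then invoke Lemma 3.8 for the resulting almost disjoint collection of $\textbf{u}$-boxes in $V_0$. The translation invariance of $k$-dimensional Lebesgue measure on $W$ (built into the definition of a $k$-dimensional Lebesgue $\textbf{w}$-measure) will be used to strip off the translation by $L(p)$ that appears along the way.

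First I would record the correspondence between $\textbf{u}$-boxes in $V$ and $\textbf{u}$-boxes in $V_0$. Since $T_p|V_0:V_0\rightarrow V$ is a homeomorphism for the subspace topologies inherited from $\mathbb{E}^n$, it carries interiors to interiors; hence, writing each $B\in\mathcal{B}$ as $B=T_p(B')$ with $B'=T_{-p}(B)$ a $\textbf{u}$-box in $V_0$, the collection $\mathcal{B}'=\{T_{-p}(B):B\in\mathcal{B}\}$ is a countable almost disjoint collection of $\textbf{u}$-boxes in $V_0$, i.e.\ $\mathcal{B}'\in\mathbb{B}_0(V_0,\textbf{u})$, and $B\mapsto B'$ is a bijection of $\mathcal{B}$ onto $\mathcal{B}'$. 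Write $M=L|V_0:V_0\rightarrow W$, a linear map between the $k$-dimensional inner product spaces $V_0$ and $W$; fix an ordered orthonormal basis $\textbf{w}$ for $W$ and put $\delta=|Det(C_{\textbf{u},\textbf{w}}(M))|$. By Proposition 2.1 applied to $L$, or by a one-line direct computation, $L(T_p(B'))=T_{L(p)}(M(B'))$ for each $B$, and likewise $L(\bigcup\mathcal{B})=T_{L(p)}(M(\bigcup\mathcal{B}'))$, where $T_{L(p)}$ is translation by $L(p)$ in $W$. Since $\bigcup\mathcal{B}'$ and each $B'$ lie in $\mathcal{M}(V_0)$, Theorem 3.5 guarantees $M(\bigcup\mathcal{B}')$ and each $M(B')$ lie in $\mathcal{M}(W)$, so $L(\bigcup\mathcal{B})$ and each $L(B)$ lie in $\mathcal{M}(W)$ and the asserted identity is meaningful.

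Now I would compute the two sides. For the left side, translation invariance of $Vol_k$ on $W$ gives $Vol_k(L(\bigcup\mathcal{B}))=Vol_k(M(\bigcup\mathcal{B}'))$; Theorem 3.5 rewrites this as $\delta\cdot Vol_k(\bigcup\mathcal{B}')$; and Lemma 3.8, applicable since $\mathcal{B}'\in\mathbb{B}_0(V_0,\textbf{u})$, rewrites it as $\delta\sum_{B'\in\mathcal{B}'}Vol_k(B')$. For the right side, translation invariance and Theorem 3.5 applied to each bounded (hence finite-measure) box give $\sum_{B\in\mathcal{B}}Vol_k(L(B))=\sum_{B\in\mathcal{B}}Vol_k(M(B'))=\sum_{B\in\mathcal{B}}\delta\,Vol_k(B')=\delta\sum_{B'\in\mathcal{B}'}Vol_k(B')$, the final equality being the reindexing along the bijection $B\mapsto B'$. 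Comparing the two resulting expressions yields the Proposition.

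I expect no genuinely hard step; the only point that deserves a word of care is the possibility that $M$ fails to be injective, so that $\delta=0$ and $Vol_k(\bigcup\mathcal{B}')$ may be infinite. This is harmless: when $\delta=0$, Theorem 3.5 gives $Vol_k(M(A))=0$ for every $A\in\mathcal{M}(V_0)$, so both sides of the desired identity are simply $0$ (and, read with the convention $0\cdot(+\infty)=0$ that is implicit in the statement of Theorem 3.5, the uniform computation above goes through verbatim). Hence no real case analysis is needed beyond this remark.
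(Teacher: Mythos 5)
Your proof is correct and follows essentially the same route as the paper's: translate the boxes back to $V_0$, apply Theorem 3.5 to $L|V_0$ together with Lemma 3.8, and use Proposition 2.1 plus translation invariance of $Vol_k$ on $W$ to absorb the shift by $L(p)$. Your closing remark on the degenerate case $\delta=0$ (where $Vol_k(\bigcup\mathcal{B}')$ may be infinite) is a point the paper passes over silently, and it is a worthwhile addition.
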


\begin{proof}
First consider the special case $V=V_0$.
Then $\mathcal{B}\in\mathbb{B}_0(V_0,\textbf{u})$.
Let \textbf{w} be an ordered orthonormal basis for $W$.
Then Theorem 3.5 and Lemma 3.7 imply
\begin{center}
$Vol_k(L(\bigcup\mathcal{B}))=|Det(C_{\textbf{u},\textbf{w}}(L))|\;Vol_k(\bigcup\mathcal{B})=|Det(C_{\textbf{u},\textbf{w}}(L))|\;\sum_{B\in\mathcal{B}}Vol_k(B)$
\end{center}
\begin{center}
    $=\sum_{B\in\mathcal{B}}|Det(C_{\textbf{u},\textbf{w}}(L))|\;Vol_k(B)=\sum_{B\in\mathcal{B}}Vol_k(L(B))$.
\end{center}

Now consider the general case $V=T_p(V_0)$.
Given $\mathcal{B}\in\mathbb{B}_0(V,\textbf{u})$, there is a $\mathcal{B}_0\in\mathbb{B}_0(V_0,\textbf{u})$ such that $\mathcal{B}=\{T_p(B):B\in\mathcal{B}_0\}$.  
Since $T_p|V_0:V_0\rightarrow V$ is a homeomorphism, then $\mathcal{B}_0\in\mathbb{B}_0(V_0,\textbf{u})$.  
Hence, the argument in the previous paragraph shows that
\begin{center}
$Vol_k(L(\bigcup\mathcal{B}_0))=\sum_{B\in\mathcal{B}_0}Vol_k(L(B))$.
\end{center}
Note that $T_p(\bigcup\mathcal{B}_0)=\bigcup\mathcal{B}$.
Using Proposition 2.1 and the fact that $Vol_k:\mathcal{M}(W)\rightarrow[0,\infty]$ is translation invariant, we have

\bigskip\hspace{10mm}$Vol_k(L(\bigcup\mathcal{B}))=Vol_k(L\circ T_p(\bigcup\mathcal{B}_0))=Vol_k(T_{L(p)}\circ L(\bigcup\mathcal{B}_0))=$

\bigskip\hspace{10mm}$Vol_k(L(\bigcup\mathcal{B}_0))=\sum_{B\in\mathcal{B}_0}Vol_k(L(B))=\sum_{B\in\mathcal{B}_0}Vol_k(T_{L(p)}\circ L(B))=$

\bigskip\hspace{28mm}$\sum_{B\in\mathcal{B}_0}Vol_k(L\circ T_p(B))=\sum_{B\in\mathcal{B}}Vol_k(L(B))$.
\end{proof}
\smallskip

\section{The Proof of the Product Formula for Volume - for parallelopipeds}
Let $V$ be a $k$-dimensional vector subspace of $\mathbb{E}^n$ and let $\textbf{u}=(u_1,u_2,\cdots,u_k)$ be an ordered orthonormal basis for $V$.  If $\textbf{x}=(x_1,x_2,\cdots,x_k)\in V^k$, let $[\textbf{x}]=[x_1,x_2,\cdots,x_k]$ denote the parallelopiped 
		$\{\sum_{1\leq i\leq k}t_ix_i:0\leq t_i\leq 1\text{ for }1\leq i\leq k\}$.  
Then $Vol_k([\textbf{u}])=1$ because $Vol_k$ coincides with $k$-dimensional Lebesgue \textbf{u}-measure on $V$ by Corollary 3.4.

Suppose $\textbf{x}=(x_1,x_2,\cdots,x_k)\in V^k$ and define the linear map $L:V\rightarrow V$ by the equations $L(u_i)=x_i$ for $1\leq i\leq k$.  
Then $L(\textbf{u})=(L(u_1),L(u_2),\cdots,L(u_k))=(x_1,x_2,\cdots,x_k)=\textbf{x}$ and $L([\textbf{u}])=[\textbf{x}]$.  
Proposition 2.3 implies $C_{\textbf{u},\textbf{u}}(L)=C_{\textbf{u}}(L(\textbf{u}))\\
=C_{\textbf{u}}(\textbf{x})$. Hence, Theorem 3.5 implies

\medskip$(\alpha)\hspace{14mm} Vol_k([\textbf{x}])=
\vert Det(C_{\textbf{u},\textbf{u}}(L))\vert
Vol_k([\textbf{u}])=
\vert Det(C_{\textbf{u}}(\textbf{x})\vert$.

\medskip\noindent Now suppose $\textbf{x}=(x_1,x_2,\cdots,x_k)$ and $\textbf{y}=(y_1,y_2,\cdots,y_k)\in V^k$.  
We will prove

\medskip$(\beta)\hspace{7mm} \langle\langle x_1\wedge x_2\wedge\cdots\wedge x_k,y_1\wedge y_2\wedge\cdots\wedge y_k\rangle\rangle=Vol_k([\textbf{x}])Vol_k([\textbf{y}])$.

\medskip\noindent For the purpose of proving the Product Formula for Volume for $A=[\textbf{x}]$ and $B=[\textbf{y}]$, the specific orders of the entries of \textbf{x} and \textbf{y} are immaterial.  
So let us interchange $y_1$ and $y_2$ if necessary so that $Det(C_{\textbf{u}}(\textbf{x}))$ and $Det(C_{\textbf{u}}(\textbf{y}))$ have the same sign.  
The definition of $\langle\langle\;,\;\rangle\rangle$ implies

\begin{center}
$\langle\langle x_1\wedge x_2\wedge\cdots\wedge x_k,y_1\wedge y_2\wedge\cdots\wedge y_k\rangle\rangle=Det(x_i\sbullet y_i)$
\end{center}

\noindent Let $(a_{i,j})=C_{\textbf{u}}(\textbf{x})$ and $(b_{i,j})=C_{\textbf{u}}(\textbf{y})$.
Then $x_i=\sum_{1\leq r\leq k}a_{i,r}u_r$ and $y_i=\sum_{1\leq s\leq k}b_{i,s}u_s$.
Therefore,

\begin{center}
$x_i\sbullet y_j=(\sum_ra_{i,r}u_r)\sbullet(\sum_sb_{j,s}u_s)=\sum_{r,s}a_{i,r}b_{j,s}(u_r\sbullet u_s)$
\end{center}

\begin{center}
$=\sum_ra_{i,r}b_{j,r}=\sum_ra_{i,r}(b_{r,j}^T)$.
\end{center}

\noindent Thus, $(x_i\sbullet y_j)=C_{\textbf{u}}(\textbf{x})(C_{\textbf{u}}(\textbf{y})^T)$.
Hence,

\medskip$(\gamma)\hspace{3.5mm} \langle\langle x_1\wedge x_2\wedge\cdots\wedge x_k,y_1\wedge y_2\wedge\cdots\wedge y_k\rangle\rangle=Det(C_{\textbf{u}}(\textbf{x}))Det(C_{\textbf{u}}(\textbf{y}))$.

\medskip\noindent Since $Det(C_{\textbf{u}}(\textbf{x}))$ and $Det(C_{\textbf{u}}(\textbf{y}))$ have the same sign, then we have:

\medskip$(\delta)\hspace{1.25mm} \langle\langle x_1\wedge x_2\wedge\cdots\wedge x_k,y_1\wedge y_2\wedge\cdots\wedge y_k\rangle\rangle=|Det(C_{\textbf{u}}(\textbf{x}))||Det(C_{\textbf{u}}(\textbf{y}))|$.

\medskip\noindent $(\delta)$ and $(\alpha)$ clearly imply $(\beta)$.

Again suppose $\textbf{x}=(x_1,x_2,\cdots,x_k)$ and $\textbf{y}=(y_1,y_2,\cdots,y_k)\in V^k$ with entries ordered so that $Det(C_{\textbf{u}}(\textbf{x}))$ and $Det(C_{\textbf{u}}(\textbf{y}))$ have the same sign.
Let $J\in\mathcal{S}(n,k)$.
Recall that $\mathbb{E}^J=\{(z_1,z_2,\cdots,z_n)\in\mathbb{E}^n:z_i=0\text{ if }i\notin J\}$ and $\pi_J:\mathbb{E}^n\rightarrow\mathbb{E}^n$ is defined by $\pi_J(z_1,z_2,\cdots,z_n)=(w_1,w_2,\cdots,w_n)$ where $w_i=z_i$ if $i\in J$ and $w_i=0$ if $i\notin J$.
Let $\textbf{e}=(e_1,e_2,\cdots,e_n)$ be the standard ordered orthonormal basis for $\mathbb{E}^n$.
Write $J=\{j_1,j_2,\cdots,j_k\}$ where $1\leq j_1<j_2<\cdots<j_k\leq n$, let $\textbf{e}_J=(e_{j_1},e_{j_2},\cdots,e_{j_k})$ and let $\wedge_J\textbf{e}=\textit{i}(\textbf{e}_J)=e_{j_1}\wedge e_{j_2}\wedge\cdots\wedge e_{j_k}$.
We now calculate $\langle\langle x_1\wedge x_2\wedge\cdots\wedge x_k,\wedge_J\textbf{e}\rangle\rangle$.
First observe that for $1\leq i\leq k$ and $1\leq t\leq k$, $x_i\sbullet e_{j_t}=\pi_J(x_i)\sbullet e_{j_t}$.
Hence, the definition of $\langle\langle\;,\;\rangle\rangle$ implies
\[
\langle\langle x_1\wedge x_2\wedge\cdots\wedge x_k,\wedge_J\textbf{e}\rangle\rangle=Det(x_j\sbullet e_{j_t})=Det(\pi_J(x_i)\sbullet e_{j_t})
\]
 \[
=\langle\langle\pi_J(x_1)\wedge\pi_J(x_2)\wedge\cdots\wedge\pi_J(x_k),\wedge_J\textbf{e}\rangle\rangle.
\]
Let $\pi_J(\textbf{x})=(\pi_J(x_1),\pi_J(x_2),\cdots,\pi_J(x_k))$.
Since $\mathbb{E}^J$ is a $k$-dimensional vector subspace of $\mathbb{E}^n$, and both $\pi_J(\textbf{x})$ and $\textbf{e}_j\in(\mathbb{E}^J)^k$, then we can apply $(\gamma)$ to conclude that
\[
\langle\langle x_1\wedge x_2\wedge\cdots\wedge x_k,\wedge_J\textbf{e}\rangle\rangle=Det(C_{\textbf{e}_J}(\pi_J(\textbf{x})))Det(C_{\textbf{e}_J}(\textbf{e}_J))=Det(C_{\textbf{e}_J}(\pi_J(\textbf{x}))).
\]
Similarly
\[
\langle\langle y_1\wedge y_2\wedge\cdots\wedge y_k,\wedge_J\textbf{e}\rangle\rangle=Det(C_{\textbf{e}_J}(\pi_J(\textbf{y}))).
\]
Hence,

\medskip$(\epsilon)\hspace{15.3mm}\langle\langle x_1\wedge x_2\wedge\cdots\wedge x_k,\wedge_J\textbf{e}\rangle\rangle\langle\langle y_1\wedge y_2\wedge\cdots\wedge y_k,\wedge_J\textbf{e}\rangle\rangle$
\[
=Det(C_{\textbf{e}_J}(\pi_J(\textbf{x})))Det(C_{\textbf{e}_J}(\pi_J(\textbf{y}))).
\]
We assert that $Det(C_{\textbf{e}_J}(\pi_J(\textbf{x})))Det(C_{\textbf{e}_J}(\pi_J(\textbf{y})))\geq0$.
Indeed, since
\[
C_{\textbf{e}_J}(\pi_J(\textbf{x}))=C_\textbf{u}(\textbf{x})C_{\textbf{u},\textbf{e}_J}(\pi_J)\text{ and }C_{\textbf{e}_J}(\pi_J(\textbf{y}))=C_\textbf{u}(\textbf{y})C_{\textbf{u},\textbf{e}_J}(\pi_J), 
\]
then
\[
Det(C_{\textbf{e}_J}(\pi_J(\textbf{x})))Det(C_{\textbf{e}_J}(\pi_J(\textbf{y})))=Det(C_\textbf{u}(\textbf{x}))Det(C_\textbf{u}(\textbf{y}))(Det(C_{\textbf{u},\textbf{e}_J}(\pi_J)))^2.
\]
Since $Det(C_\textbf{u}(\textbf{x}))$ and $Det(C_\textbf{u}(\textbf{y}))$ have the same sign, our assertion follows.
Hence, $(\epsilon)$ implies
\[
\langle\langle x_1\wedge x_2\wedge\cdots\wedge x_k,\wedge_J\textbf{e}\rangle\rangle\langle\langle y_1\wedge y_2\wedge\cdots\wedge y_k,\wedge_J\textbf{e}\rangle\rangle
\]
\[
=|Det(C_{\textbf{e}_J}(\pi_J(\textbf{x})))|\;|Det(C_{\textbf{e}_J}(\pi_J(\textbf{y})))|.
\]
Since $[\pi_J(\textbf{x})]=\pi_J([\textbf{x}])$ and $[\pi_J(\textbf{y})]=\pi_J([\textbf{y}])$, then $(\alpha)$ implies 
\[
|Det(C_{\textbf{e}_J}(\pi_J(\textbf{x})))|=Vol_k(\pi_J([\textbf{x}]))\text{ and }|Det(C_{\textbf{e}_J}(\pi_J(\textbf{y})))|=Vol_k(\pi_J([\textbf{y}])).
\]
It follows that

\medskip$(\zeta)\hspace{15.3mm}\langle\langle x_1\wedge x_2\wedge\cdots\wedge x_k,\wedge_J\textbf{e}\rangle\rangle\langle\langle y_1\wedge y_2\wedge\cdots\wedge y_k,\wedge_J\textbf{e}\rangle\rangle$
\[
=Vol_k(\pi_J([\textbf{x}]))Vol_k(\pi_J([\textbf{y}])).
\]
Because $\{\wedge_J\textbf{e}:J\in\mathcal{S}(n,k)\}$ is an orthonormal basis for $\Lambda_k(\mathbb{E}^n)$, the Product Formula for Inner Products when applied to $\Lambda_k(\mathbb{E}^n)$ with its inner product $\langle\langle\;,\;\rangle\rangle$ tells us:
\[
\langle\langle x_1\wedge x_2\wedge\cdots\wedge x_k,y_1\wedge y_2\wedge\cdots\wedge y_k\rangle\rangle=
\]
\begin{center}
$\sum_{J\in\mathcal{S}(n,k)}\langle\langle x_1\wedge x_2\wedge\cdots\wedge x_k,\wedge_J\textbf{e}\rangle\rangle\langle\langle y_1\wedge y_2\wedge\cdots\wedge y_k,\wedge_J\textbf{e}\rangle\rangle$.
\end{center}
Combining this equation with $(\beta)$ and $(\zeta)$ yields the Product Formula for 

\noindent$k$-dimensional parallelopipeds that lie in a $k$-dimensional vector subspace of $\mathbb{E}^n$:

\medskip$(\eta)\hspace{8.5mm}Vol_k([\textbf{x}])Vol_k([\textbf{y}])=\sum_{J\in\mathcal{S}(n,k)}Vol_k(\pi_J([\textbf{x}]))Vol_k(\pi_j([\textbf{y}])).$  \qed
\medskip

\section{The Proof of the Product Formula for Volume - for Lebesgue measurable subsets of parallel $k$-dimensional affine subspaces of $\mathbb{E}^n$}
Let $V$ and $W$ be identical or parallel $k$-dimensional affine subspaces of $\mathbb{E}^n$. 
We first prove the Product Formula for Volume for boxes in V and W.  
There is a $k$-dimensional vector subspace $U$ of $\mathbb{E}^n$ and points $p,q\in\mathbb{E}^n$ such that $V=T_p(U)$ and $W=T_q(U)$.
Let $\textbf{u}=(u_1,u_2,\cdots,u_k)$ be an ordered orthonormal basis for $U$.
Suppose $B$ is a \textbf{u}-box in $V$ and $C$ is a \textbf{u}-box in $W$.
Then $B=T_p(B_0)$ and $C=T_q(C_0)$ where $B_0$ and $C_0$ are \textbf{u}-boxes in $U$.
Therefore, $Vol_k(B)=Vol_k(B_0)$ and $Vol_k(C)=Vol_k(C_0)$.
Suppose
\begin{center}
$B_0=\{\sum_{1\leq i\leq k}t_iu_i:b_i\leq t_i\leq b_i^\prime\text{ for }1\leq i\leq k\}$ 
\end{center}
and
\begin{center}
$C_0=\{\sum_{1\leq i\leq k}t_iu_i:c_i\leq t_i\leq c_i^\prime\text{ for }1\leq i\leq k\}$ 
\end{center}
where $b_i<b_i^\prime$ and $c_i<c_i^\prime$ are real numbers for $1\leq i\leq k$.
Let $\textbf{x}=(x_1,x_2,\cdots,x_k)$ and let $\textbf{y}=(y_1,y_2,\cdots,y_k)$ where $x_i=b_i^\prime-b_i$ and $y_i=c_i^\prime-c_i$ for $1\leq i\leq k$.
Also let $p^\prime=\sum_{1\leq i\leq k}b_iu_i$ and $q^\prime=\sum_{1\leq i\leq k}c_iu_i$.
Then $B_0=T_{p^\prime}([\textbf{x}])$ and $C_0=T_{q^\prime}([\textbf{y}])$.
Since $p^\prime$ and $q^\prime\in U$, then $T_{p^\prime}$ and $T_{q^\prime}$ map $U$ onto itself.
Since $Vol_k$ is translation invariant on $U$, it follows that $Vol_k(B_0)=Vol_k([\textbf{x}])$ and $Vol_k(C_0)=Vol_k([\textbf{y}])$.
We conclude that

\medskip$(\theta)\hspace{24.8mm}Vol_k(B)Vol_k(C)=Vol_k([\textbf{x}])Vol_k([\textbf{y}]).$

\medskip Let $J\in\mathcal{S}(n,k)$.
For $z\in\mathbb{E}^n$, since $\pi_J(z)\in\mathbb{E}^J$, then $T_{\pi_J(z)}$ maps $\mathbb{E}^J$ to itself. 
Since $Vol_k$ is translation invariant on $\mathbb{E}^J$, then we have $Vol_k(T_{\pi_J(z)}(A))=Vol_k(A)$ for every Lebesgue measurable subset $A$ of $\mathbb{E}^J$.
Recall that for $z,w\in\mathbb{E}^n$, $T_z\circ T_w=T_{z+w}$ and, by Proposition 
2.1 applied to the linear map $\pi_J:\mathbb{E}^n\rightarrow\mathbb{E}^J$, $\pi_J\circ T_z=T_{\pi_J(z)}\circ \pi_J$.  
Hence,
\[
Vol_k(\pi_J(B))=Vol_k(\pi_J(T_p(B_0)))=Vol_k(\pi_J(T_p(T_{p^\prime}([\textbf{x}]))))=  
\]
\[
Vol_k(\pi_J\circ T_{p+p^\prime}([\textbf{x}]))=Vol_k(T_{\pi_J(p+p^\prime)}(\pi_J([\textbf{x}]))=Vol_k(\pi_J([\textbf{x}])).
\]
Similarly, $Vol_k(\pi_J(C))=Vol_k(\pi_J([\textbf{y}]))$.
Therefore, 

\medskip$(\kappa)\hspace{11.7mm}Vol_k(\pi_J(B))Vol_k(\pi_J(C))=Vol_k(\pi_J([\textbf{x}]))Vol_k(\pi_J([\textbf{y}]))$ 
\begin{center}
    for every $J\in\mathcal{S}(n,k)$.
\end{center}

Combining $(\eta)$, $(\theta)$ and $(\kappa)$ yields a Product Formula for Volume that holds for \textbf{u}-boxes $B$ and $C$ in parallel $k$-dimensional affine subspaces $V$ and $W$ of $\mathbb{E}^n$:
\begin{center}
$Vol_k(B)Vol_k(C)=\sum_{J\in\mathcal{S}(n,k)}Vol_k(\pi_J(B))Vol_k(\pi_J(C))$.    
\end{center}

Next we prove the Product Formula for Volume for two unions of almost disjoint collections of \textbf{u}-boxes in $V$ and $W$.
Let $\mathcal{B}\in\mathbb{B}_0(V,\textbf{u})$ and $\mathcal{C}\in\mathbb{B}_0(W,\textbf{u})$.
For each $J\in\mathcal{S}(n,k)$, since $\pi_J:\mathbb{E}^n\rightarrow\mathbb{E}^J$ is a linear map, then Corollary 3.10 implies that $Vol_k(\pi_J(\bigcup\mathcal{B}))=\sum_{B\in\mathcal{B}}Vol_k(\pi_J(B))$ and $Vol_k(\pi_J(\bigcup\mathcal{C}))=\sum_{C\in\mathcal{C}}Vol_k(\pi_J(C))$.
Hence, we have:
\begin{center}
$Vol_k(\bigcup\mathcal{B})Vol_k(\bigcup\mathcal{C})=(\sum_{B\in\mathcal{B}}Vol_k(B))(\sum_{C\in\mathcal{C}}Vol_k(C))=$
\end{center}
\medskip\begin{center}
$\sum_{B\in\mathcal{B}}\sum_{C\in\mathcal{C}}Vol_k(B)Vol_k(C)=$
\end{center}
\medskip\begin{center}
$\sum_{B\in\mathcal{B}}\sum_{C\in\mathcal{C}}(\sum_{J\in\mathcal{S}(n,k)}Vol_k(\pi_J(B))Vol_k(\pi_J(C)))=$
\end{center}
\medskip\begin{center}
$\sum_{J\in\mathcal{S}(n,k)}(\sum_{B\in\mathcal{B}}\sum_{C\in\mathcal{C}}Vol_k(\pi_J(B))Vol_k(\pi_J(C)))=$
\end{center}
\medskip\begin{center}
$\sum_{J\in\mathcal{S}(n,k)}(\sum_{B\in\mathcal{B}}Vol_k(\pi_J(B)))(\sum_{C\in\mathcal{C}}Vol_k(\pi_J(C)))=$
\end{center}
\medskip\begin{center}
$\sum_{J\in\mathcal{S}(n,k)}Vol_k(\pi_J(\bigcup\mathcal{B}))Vol_k(\pi_J(\bigcup\mathcal{C}))$.
\end{center}

\medskip To complete the proof of the Product Formula for Volume, it suffices to prove that the function $\Psi:\mathcal{M}(V)\times\mathcal{M}(W)\rightarrow\mathbb{R}$ defined by
\medskip\begin{center}
$\Psi(A,B)=Vol_k(A)Vol_k(B)-\sum_{J\in\mathcal{S}(n,k)}Vol_k(\pi_J(A))Vol_k(\pi_J(B))$ 
\end{center}
equals $0$ at each point $(A,B)\in\mathcal{M}(V)\times\mathcal{M}(W)$.
We argue that $\Psi$ is continuous.
A remark following the definition of the \emph{symmetric difference pseudometric} in section 3 established that measures are continuous.  In particular, the following functions are continuous:
\[
Vol_k:\mathcal{M}(V)\rightarrow\mathbb{R}\text{, }Vol_k:\mathcal{M}(W)\rightarrow\mathbb{R}\text{ and }\]
\[Vol_k:\mathcal{M}(\mathbb{E}^J)\rightarrow\mathbb{R}\text{ for each }J\in\mathcal{S}(n,k).
\]
Since the functions $\pi_J|V:V\rightarrow\mathbb{E}^J$ and $\pi_J|W:W\rightarrow\mathbb{E}^J$ are linear for each $J\in\mathcal{S}(n,k)$, then Corollary 3.6 implies that the following functions are dilations and, hence, are continuous:
\[
(\pi_J|V)_*:\mathcal{M}(V)\rightarrow\mathcal{M}(\mathbb{E}^J)\text{ and } (\pi_J|V)_*:\mathcal{M}(V)\rightarrow\mathcal{M}(\mathbb{E}^J)\text{ for each }J\in\mathcal{S}(n,k).
\]
Consequently, the compositions
\[
Vol_k\circ(\pi_J|V)_*:\mathcal{M}(V)\rightarrow \mathbb{R}\text{ and }Vol_k\circ(\pi_J|W)_*:\mathcal{M}(W)\rightarrow \mathbb{R}
\]
are continuous for each $J\in\mathcal{S}(n,k)$.
Thus, the function
\[
(A,B)\mapsto(Vol_k(A),Vol_k(B),(Vol_k\circ\pi_J(A),Vol_k\circ\pi_J(B))_{J\in\mathcal{S}(n,k)})
\]
\[
:\mathcal{M}(V)\times\mathcal{M}(W)\rightarrow\mathbb{R}^2\times(\mathbb{R}^2)^{|\mathcal{S}(n,k)|}
\]
is continuous, where $|\mathcal{S}(n,k)|$ denotes the number of elements in the finite set $\mathcal{S}(n,k)$.
Also the function
\begin{center}
$(s_V,s_W,(t_{V,J},t_{W,J})_{J\in\mathcal{S}(n,k)})\mapsto s_Vs_W-\sum_{J\in\mathcal{S}(n,k)}t_{V,J}t_{W,J}$
\end{center}
\begin{center}
$:\mathbb{R}^2\times(\mathbb{R}^2)^{|\mathcal{S}(n,k)|}\rightarrow\mathbb{R}$
\end{center}
is continuous.
Since $\Psi$ is the composition of the latter two functions, it is continuous.
Let $\mathcal{D}_V=\{\bigcup\mathcal{B}:\mathcal{B}\in\mathbb{B}_0(V,\textbf{u})\}$ and $\mathcal{D}_W=\{\bigcup\mathcal{C}:\mathcal{C}\in\mathbb{B}_0(W,\textbf{u})\}$.
The preceding proof establishes that $\Psi$ equals $0$ at each point $(\bigcup\mathcal{D},\bigcup\mathcal{E})$ of the subset $\mathcal{D}_V\times\mathcal{D}_W$ of $\mathcal{M}(V)\times\mathcal{M}(W)$.
Since, by Proposition 3.10$^\prime$, $\mathcal{D}_V$ is a dense subset of $\mathcal{M}(V)$ and $\mathcal{D}_W$ is a dense subset of $\mathcal{M}(W)$, then $\mathcal{D}_V\times\mathcal{D}_W$ is a dense subset of $\mathcal{M}(V)\times\mathcal{M}(W)$.
It follows that $\Psi$ equals $0$ at every point of $\mathcal{M}(V)\times\mathcal{M}(W)$.  \qed

\medskip As noted in the Abstract, the Pythagorean Theorem for Volume follows immediately from the Product Formula for Volume by setting $A=B$.

\section{The Binet-Cauchy Formula and the inner product on $\Lambda_k(\mathbb{E}^n)$}
For $1\leq k\leq n$, if $A=(a_{i,j})$ is a $k\times n$ matrix and $J=\{j_1,j_2,\cdots,j_k\}\in\mathcal{S}(n,k)$ where $1\leq j_1<j_2<\cdots<j_k\leq n$, let $A^{[J]}=(a^{[J]}_{i,t})$ denote the $k\times k$ matrix determined by $a^{[J]}_{i,t}=a_{i,j_t}$.
(In other words, for $1\leq t\leq k$, the $t^{th}$ column of $A^{[J]}$ is the $j_t^{th}$ column of $A$.)  For $1\leq k\leq n$, any two $k\times n$ matrices $A$ and $B$ satisfy the equation:

\medskip\noindent\emph{\textbf{The Binet-Cauchy} (or \textbf{Cauchy-Binet}) \textbf{Formula:}}
\begin{center}
$Det(AB^T)=\sum_{J\in\mathcal{S}(n,k)}Det(A^{[J]})Det(B^{[J]})$.
\end{center}
\medskip We will show that the Binet-Cauchy Formula is equivalent to the equation we have called the Product Formula for Inner Products specialized to the inner product $\langle\langle\;,\;\rangle\rangle$ on $\Lambda_k(\mathbb{E}^n)$.

Let $1\leq k\leq n$ and let $A$ and $B$ be $k\times n$ matrices.
For $1\leq i\leq k$, let $x_i=(x_{i,1},x_{i,2},\cdots,x_{i,n})\in\mathbb{E}^n$ be the $i^{th}$ row of $A$ and let $y_i=(y_{i,1},y_{i,2},\cdots,y_{i,n})\in\mathbb{E}^n$ be the $i^{th}$ row of $B$.
Recall that $\textbf{e}=(e_1,e_2,\cdots,e_n)$ denotes the standard ordered orthonormal basis for $\mathbb{E}^n$; and for $J=\{j_1,j_2,\cdots,j_k\}\in\mathcal{S}(n,k)$ where $1\leq j_1<j_2<\cdots<j_k\leq n$, $\textbf{e}_J=(e_{j_1},e_{j_2},\cdots,e_{j_k})$ and $\wedge_J\textbf{e}=e_{j_1}\wedge e_{j_2}\wedge\cdots\wedge e_{j_k}$.
The Product Formula for the inner product $\langle\langle\;,\;\rangle\rangle$ on $\Lambda_k(\mathbb{E}^n)$ implies:

\medskip\begin{center}
$\langle\langle x_1\wedge x_2\wedge\cdots\wedge x_k,y_1\wedge y_2\wedge\cdots\wedge y_k\rangle\rangle=$
\end{center}
\begin{center}
$\sum_{J\in\mathcal{S}(n,k)}\langle\langle x_1\wedge x_2\wedge\cdots\wedge x_k,\wedge_J\textbf{e}\rangle\rangle\langle\langle y_1\wedge y_2\wedge\cdots\wedge y_k,\wedge_J\textbf{e}\rangle\rangle$.
\end{center}
Thus, in order to convert this equation into the Binet-Cauchy Formula, it suffices to prove that 
\[
\langle\langle x_1\wedge x_2\wedge\cdots\wedge x_k,y_1\wedge y_2\wedge\cdots\wedge y_k\rangle\rangle=Det(AB^T), 
\]
and
\[\langle\langle x_1\wedge x_2\wedge\cdots\wedge x_k,\wedge_J\textbf{e}\rangle\rangle=Det(A^{[J]})\text{ and }\langle\langle y_1\wedge y_2\wedge\cdots\wedge y_k,\wedge_J\textbf{e}\rangle\rangle=Det(B^{[J]})\] 
for every $J\in\mathcal{S}(n,k)$.
Observe that
\[
\langle\langle x_1\wedge x_2\wedge\cdots\wedge x_k,y_1\wedge y_2\wedge\cdots\wedge y_k\rangle\rangle=Det(x_i\sbullet y_j),
\]
\[
\langle\langle x_1\wedge x_2\wedge\cdots\wedge x_k,\wedge_J\textbf{e}\rangle\rangle=\langle\langle x_1\wedge x_2\wedge\cdots\wedge x_k,e_{j_1}\wedge e_{j_2}\wedge\cdots\wedge e_{j_k}\rangle\rangle=Det(x_i\sbullet e_{j_t})
\]
and
\[\langle\langle y_1\wedge y_2\wedge\cdots\wedge y_k,\wedge_J\textbf{e}\rangle\rangle=\langle\langle y_1\wedge y_2\wedge\cdots\wedge y_k,e_{j_1}\wedge e_{j_2}\wedge\cdots\wedge e_{j_k}\rangle\rangle=Det(y_i\sbullet e_{j_t})
\]
for $J=\{j_1,j_2,\cdots,j_k\}\in\mathcal{S}(n,k)$ where $1\leq j_1<j_2<\cdots<j_k\leq n$.
Hence, converting the Product Formula for $\langle\langle\;,\;\rangle\rangle$ to the Binet-Cauchy Formula is simply a matter of comparing the entries of the matrices $(x_i\sbullet y_j)$, $(x_i\sbullet e_{j_t})$ and $(y_i\sbullet e_{j_t})$ to the corresponding entries of the matrices $AB^T$, $A^{[J]}$ and $B^{[J]}$.
Clearly, the $(i,j)^{th}$ entries of $(x_i\sbullet y_j)$ and $AB^T$ are both equal to $x_i\sbullet y_j$, the $(i,t)^{th}$ entries of $(x_i\sbullet e_{j_t})$ and $A^{[J]}$ are both equal to $x_{i,j_t}$, and the $(i,t)^{th}$ entries of $(y_i\sbullet e_{j_t})$ and $B^{[J]}$ are both equal to $y_{i,j_t}$.
We conclude that the Binet-Cauchy Formula is simply a reformulation of the Product Formula for $\langle\langle\;,\;\rangle\rangle$.  \qed

\medskip We note that one consequence of the Binet-Cauchy Formula is the following occasionally useful fact.

\begin{corollary}
For $1\leq k\leq n$, a $k\times n$ matrix $A$ has rank $k$ if and only if $Det(A^{[J]})\neq 0$ for some $J\in\mathcal{S}(n,k)$.
\end{corollary}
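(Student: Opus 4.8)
The plan is to obtain this as a direct consequence of the Binet-Cauchy Formula with $B=A$. Taking $B=A$ in that formula gives $Det(AA^T)=\sum_{J\in\mathcal{S}(n,k)}(Det(A^{[J]}))^2$, and since the right-hand side is a finite sum of squares of real numbers, it vanishes if and only if every term $Det(A^{[J]})$ vanishes; equivalently, $Det(AA^T)\neq0$ if and only if $Det(A^{[J]})\neq0$ for some $J\in\mathcal{S}(n,k)$. So it suffices to show that a $k\times n$ matrix $A$ has rank $k$ if and only if $Det(AA^T)\neq0$.

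For that, I would first recall that a $k\times n$ matrix has rank $k$ precisely when its $k$ rows are linearly independent. Then I would establish the equivalence ``the rows of $A$ are linearly independent $\iff Det(AA^T)\neq0$'' by the standard Gram-matrix argument: if $c^TA=0$ for some $c\in\mathbb{R}^k\setminus\{0\}$, then $c^TAA^T=0$, so $AA^T$ is singular; conversely, if $AA^Tc=0$ for some $c\in\mathbb{R}^k\setminus\{0\}$, then $\|A^Tc\|^2=c^TAA^Tc=0$, hence $A^Tc=0$, which is a nontrivial linear dependence among the rows of $A$. Chaining this equivalence together with the observation of the first paragraph yields the corollary.

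I do not anticipate any genuine obstacle; the only step requiring a sentence of justification is the Gram-matrix equivalence, and even that is routine. (Alternatively, one can bypass Binet-Cauchy entirely: $A$ has rank $k$ means some $k$ columns of $A$ are linearly independent, i.e., some $k\times k$ submatrix $A^{[J]}$ is invertible, and an invertible $A^{[J]}$ conversely forces the column rank, hence the rank, of $A$ to equal $k$. Since the corollary is advertised as a consequence of Binet-Cauchy, I would present the argument above, but this more elementary route could be mentioned.)
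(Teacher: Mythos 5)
Your proposal is correct and follows essentially the same route as the paper: set $B=A$ in the Binet--Cauchy Formula to obtain $Det(AA^T)=\sum_{J\in\mathcal{S}(n,k)}(Det(A^{[J]}))^2$, and then establish that $rank(A)=k$ if and only if $Det(AA^T)\neq 0$ via exactly the Gram-matrix argument the paper isolates as Lemma 6.2. The elementary alternative you mention (rank $k$ iff some $k\times k$ submatrix is invertible) is also valid but, as you note, bypasses the point of presenting this as a consequence of Binet--Cauchy.
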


\begin{proof}
We set $B=A$ in the Binet-Cauchy Formula to obtain the equation

\begin{center}
$Det(AA^T)=\sum_{J\in\mathcal{S}(n,k)}(Det(A^{[J]}))^2$.
\end{center}

\noindent Then with the help of Lemma 6.2 below, we obtain the equivalence of the following three statements.
\textit{i})\;$rank(A)=k$.
\textit{ii})\;$Det(AA^T)\neq 0$.
\textit{iii})\;$Det(A^{[J]})\neq 0$ for some $J\in\mathcal{S}(n,k)$.
\end{proof}

\begin{lemma}
For $1\leq k\leq n$, a $k\times k$ matrix $A$ has rank $k$ if and only if $Det(AA^T)\neq 0$.
\end{lemma}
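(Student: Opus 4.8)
The plan is to reduce everything to the two elementary properties of determinants recalled at the end of Section 2: that $Det$ is multiplicative and that $Det(A^T) = Det(A)$ for a square matrix $A$. The point is that, since $A$ is $k \times k$, so is $AA^T$, and
\[
Det(AA^T) = Det(A)\,Det(A^T) = (Det(A))^2 .
\]
Hence $Det(AA^T) \neq 0$ if and only if $Det(A) \neq 0$.

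The next step is to invoke the standard fact that a square $k \times k$ matrix $A$ satisfies $rank(A) = k$ if and only if $Det(A) \neq 0$ (equivalently, if and only if $A$ is invertible, if and only if its rows are linearly independent). Combining this with the displayed identity yields the chain
\[
rank(A) = k \iff Det(A) \neq 0 \iff (Det(A))^2 \neq 0 \iff Det(AA^T) \neq 0 ,
\]
which is exactly the assertion of the lemma.

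There is really no obstacle here; the lemma is a one-line consequence of facts already stated in the paper, so the ``hard part'' is essentially nonexistent. (If one prefers an argument that avoids referring to $Det(A)$ itself — for instance because one wants the reasoning to foreshadow the rectangular phenomenon implicit in Corollary 6.1 — one can instead observe that $AA^T x = 0$ forces $x^T A A^T x = \lVert A^T x \rVert^2 = 0$, hence $A^T x = 0$; this shows $\ker(AA^T) = \ker(A^T)$, so $AA^T$ is invertible exactly when $A^T$, equivalently $A$, has rank $k$.) The only thing worth flagging is that the hypothesis $1 \leq k \leq n$ is inert: the statement concerns a square matrix, and the ambient dimension $n$ is carried along only because the lemma is later applied to the $k \times k$ submatrices $A^{[J]}$ of a $k \times n$ matrix in the proof of Corollary 6.1.
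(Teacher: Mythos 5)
Your main argument is correct for the lemma exactly as printed, but the printed ``$k\times k$'' is almost certainly a misprint for ``$k\times n$'': the paper's own proof writes $aA=0_n$ with $0_n$ the additive identity of $\mathbb{E}^n$ (which only parses if $A$ has $n$ columns), and Corollary 6.1 invokes the lemma for the full $k\times n$ matrix $A$ of that corollary, to obtain the equivalence $rank(A)=k\iff Det(AA^T)\neq 0$ --- not for the square submatrices $A^{[J]}$, as you assert at the end. So the hypothesis $1\leq k\leq n$ is not inert, and your central identity $Det(AA^T)=Det(A)\,Det(A^T)=(Det(A))^2$ is unavailable in the intended setting, since $Det(A)$ is undefined for a rectangular matrix. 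Measured against what the lemma is actually for, your primary proof establishes only the degenerate square case and misses the real content, which is exactly the rectangular phenomenon you relegate to a parenthesis.

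That parenthesis, however, is a complete and correct proof of the intended statement: $AA^Tx=0$ forces $x^TAA^Tx=\lVert A^Tx\rVert^2=0$, hence $A^Tx=0$, so $\ker(AA^T)=\ker(A^T)$, and the $k\times k$ matrix $AA^T$ is nonsingular precisely when $rank(A^T)=rank(A)=k$. This is essentially the paper's own argument transposed: the paper runs the same positive-definiteness trick on the left, deducing $aA=0_n$ from $(aA)\sbullet(aA)=a(AA^T)a^T=0$, and phrases the result as a chain of equivalences about linear dependence of the rows of $A$ and of $AA^T$. You therefore have the right proof in hand; you have simply mislabeled it as an optional stylistic variant and promoted the trivial square case to the main event. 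I would restate the lemma for $k\times n$ matrices and make the kernel (or row-dependence) argument the body of the proof.
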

\begin{proof}
Consider the following statements.  
\textit{i})\;$rank(A)<k$.  
\textit{ii})\;The rows of A are linearly dependent.  
\textit{iii})\;There is an $a\in\mathbb{E}^k-\{0_k\}$ such that $aA=0_n$ (where $0_k$ and $0_n$ are the additive identities of $\mathbb{E}^k$ and $\mathbb{E}^n$, respectively).
\textit{iv})\;There is an $a\in\mathbb{E}^k-\{0_k\}$ such that $a(AA^T)a^T=(aA)\sbullet(aA)=0$.
\textit{v})\;There is an $a\in\mathbb{E}^k-\{0_k\}$ such that $a(AA^T)=0_k$.
\textit{vi})\;The rows of $AA^T$ are linearly dependent.
\textit{vii})\;$Det(AA^T)=0$.
Observe that \textit{i}), \textit{ii}) and \textit{iii}) are equivalent, \textit{iii}) $\implies$ \textit{v}) $\implies$ \textit{iv}) $\implies$ \textit{iii}), and \textit{v}), \textit{vi}) and \textit{vii}) are equivalent.
\end{proof}



\printbibliography

\end{document}